%%%%%%
%%
%%     last edit 29/02/2020   GL
%%
%%%%%%

\documentclass[12pt,a4paper,reqno]{amsart}

\title{Twisted Ehresmann Schauenburg bialgebroids}
%\date{29 February 2020}
%
\usepackage{amsmath, amssymb, amsthm}
\usepackage[english]{babel}
\usepackage{fullpage}
\usepackage{enumerate}
\usepackage[all]{xy}
\usepackage{graphicx}
\usepackage{mathtools}
\usepackage{mathrsfs}
\usepackage{hyperref}
\usepackage{verbatim}

\author{Xiao Han*}
\address[]{\textit{Xiao Han}
\newline \indent SISSA,
%(Scuola Internazionale Superiore di Studi Avanzati),
via Bonomea 265, 34136 Trieste, Italy}
\email{xihan@sissa.it}

  % conjugation action of bisections
\DeclareMathOperator{\Hom}{Hom}
 % image
%\DeclareMathOperator{\Kan}{Kan}% Kan condition

        %imaginary unit
        %Euler number
\newcommand*{\id}{\textup{id}}%identity map
    %diagonal

% ambienti:

\numberwithin{equation}{section}

\theoremstyle{plain}

\newtheorem{thm}{Theorem}[section]
\newtheorem{lem}[thm]{Lemma}
\newtheorem{prop}[thm]{Proposition}
 \newtheorem{cor}[thm]{Corollary}
\newtheorem{defi}[thm]{Definition}

\theoremstyle{remark}

\newtheorem{rem}[thm]{Remark}

\numberwithin{equation}{section}

\newcommand{\ot}{\otimes}

\newcommand{\beq}{\begin{equation}}
\newcommand{\eeq}{\end{equation}}

\newcommand{\cL}{\mathcal{L}}

\newcommand{\C}{\mathcal{C}}

        %torus
        %compact operators on a Hilbert module
        %compact operators on a Hilbert module

% comandi per notazioni di Sweedler - coprodotto e coazioni:
\newcommand{\zero}[1]{{#1}{}_{\scriptscriptstyle{(0)}}}
\newcommand{\one}[1]{{#1}{}_{\scriptscriptstyle{(1)}}}
\newcommand{\two}[1]{{#1}{}_{\scriptscriptstyle{(2)}}}
\newcommand{\three}[1]{{#1}{}_{\scriptscriptstyle{(3)}}}
\newcommand{\four}[1]{{#1}{}_{\scriptscriptstyle{(4)}}}
\newcommand{\five}[1]{{#1}{}_{\scriptscriptstyle{(5)}}}
\newcommand{\six}[1]{{#1}{}_{\scriptscriptstyle{(6)}}}
\newcommand{\seven}[1]{{#1}{}_{\scriptscriptstyle{(7)}}}
\newcommand{\eight}[1]{{#1}{}_{\scriptscriptstyle{(8)}}}
\newcommand{\nine}[1]{{#1}{}_{\scriptscriptstyle{(9)}}}
\newcommand{\ten}[1]{{#1}{}_{\scriptscriptstyle{(10)}}}
\newcommand{\eleven}[1]{{#1}{}_{\scriptscriptstyle{(11)}}}
\newcommand{\twelve}[1]{{#1}{}_{\scriptscriptstyle{(12)}}}

% comandi per mappa traslazione
\newcommand{\tuno}[1]{{#1}{}{}^{\scriptscriptstyle{<1>}}}
\newcommand{\tdue}[1]{{#1}{}{}^{\scriptscriptstyle{<2>}}}

\begin{document}

\begin{abstract}
We construct an invertible normalised 2 cocycle on the Ehresmann Schauenburg bialgebroid of a cleft Hopf Galois extension under the condition that the corresponding Hopf algebra is cocommutative and the image of the unital cocycle corresponding to this cleft Hopf Galois extension belongs to the centre of the coinvariant  subalgebra. Moreover, we show that any Ehresmann Schauenburg bialgebroid of this kind is isomorphic to a 2-cocycle twist of the Ehresmann Schauenburg bialgebroid corresponding to a Hopf Galois extension without cocycle, where comodule algebra is an ordinary smash product of the coinvariant subalgebra and the Hopf algebra (i.e. $\C(B\#_{\sigma}H, H)\simeq \C(B\#H, H)^{\tilde{\sigma}}$).
We also study the theory in the case of a Galois object where the base is trivial but without requiring the Hopf algebra to be cocommutative.
\end{abstract}

\maketitle
\tableofcontents
\parskip = .75 ex

\section{Introduction}

The study of principal bundles and groupoids are important in different areas of mathematics and physics. In the area of noncommutative geometry, we are always interested in Hopf Galois extensions, which can be viewed as quantisation of principal bundles.
For any principal bundle, we can construct a gauge groupoid. In the `quantum' case, the `quantum' gauge groupoid can also be constructed for any Hopf Galois extension $B=A^{coH}\subseteq A$ (quantum principal bundles). This kind of `quantum' gauge groupoids $\C(A, H)$ are called Ehresmann Schauenburg bialgebroids.\\

In this paper, we will study cleft Hopf Galois extensions, which can be viewed as the quantisation of trivial principal bundles. Since any cleft Hopf Galois extension is isomorphic to the crossed product $B\#_{\sigma}H$ of the coinvariant subalgebra $B=A^{coH}$ and the Hopf algebra $H$ for a unital cocycle $\sigma: H\otimes H\to B$, so instead of studying the comodule algebra $A$ directly, we can work on the crossed product algebra $B\#_{\sigma}H$.

As it was shown in \cite{D89} that given a cocommutative Hopf algebra $H$, there is a bijective correspondence between the equivalence classes of $H$-cleft Hopf Galois extensions and the second cohomology group $\mathcal{H}^{2}(H, Z(B))$, where $Z(B)$ is the centre of the coinvariant subalgebra $B$.
So in this paper we will mainly consider cleft Hopf Galois extensions with cocommutative Hopf algebra and its second cohomology group $\mathcal{H}^{2}(H, Z(A))$.
Under this special assumption, we can show that there is an invertible normalised 2-cocycle $\tilde{\sigma}$ on the Ehresmann Schauenburg bialgebroid associated to the cleft Hopf Galois extension. Moreover, the Ehresmann Schauenburg bialgebroid is isomorphic to the 2 cocycle twisted algebra $\C(B\#H, H)^{\tilde{\sigma}}$, where $B\#H$ is the smash product of $B$ and $H$. We can also show that if the action of $H$ on $B$ is trivial, then the corresponding Ehresmann Schauenburg bialgebroid associated to the cleft Hopf Galois extension is isomorphic to  $\C(B\#H, H)^{\tilde{\sigma}}$ even if the Hopf algebra $H$ is not cocommutative. In particular, we will study the case of Galois objects for any Hopf algebras. \\

In Section \S2, we will give a brief introduction to Hopf Galois extension, and  in particular the cleft Hopf Galois extension with its properties. Then we will also recall bialgebroids and 2-cocycles of them. In Section \S3, we will first study Ehresmann Schauenburg bialgebroid, then we show under some conditions there is an invertible normailsed 2 cocycle  on $\C(B\#H, H)$, such that $\C(A, H)\simeq\C(B\#_{\sigma}H, H)\simeq \C(B\#H, H)^{\tilde{\sigma}}$. Finally, we will apply the general theory on Galois objects.

\section{Algebraic preliminaries}

In this section we will first recall the definitions of comodule algebras and module algebras of Hopf algebras, then we will study Hopf Galois extensions which can be viewed as the quantisation of principal bundles. In particular, we will recall some properties of cleft Hopf Galois extensions, which can be viewed as trivial noncommutative principal bundles.
We also recall the more general notions of rings and corings over an algebra as well as the associated notion of bialgebroids. In this paper, we will assume all the algebras, comodules and modules are vector spaces over $\mathbb{C}$.

Let $H$ be a Hopf algebra with coproduct $\Delta$, counit $\epsilon$ and antipode $S$. We use the sumless Sweedler notation to denote the image of the coproduct, i.e. $\Delta(h)=\one{h}\otimes \two{h}$ for all $h\in H$. The convolution algebra of the dual space $H':=\Hom(H, \mathbb{C})$ is an unital associative algebra with the product given by $\phi\star\psi(h):=\phi(\one{h})\psi(\two{h})$, for all $\phi, \psi\in H'$.

Given a Hopf algebra $H$, a \textit{left $H$ module algebra} is an algebra $B$, such that it is a left $H$ module. Moreover, it satisfies:
\begin{align}
    h\triangleright(ab)=(\one{h}\triangleright a)(\two{h}\triangleright b),\quad h\triangleright 1=\epsilon(h)1,
\end{align}
for all $a, b\in A$ and $h, g\in H$, where $\triangleright: H\otimes B\to B$ is the action of the left module $B$.\\
Given a left $H$ module algebra $A$, we can define a new algebra $A\#H$, which as a vector space is equal to $A\otimes H$, and the product is given by
\begin{align}\label{equ. smash product}
    (a\#h)(a'\#g)=a(\one{h}\triangleright a')\#\two{h}g,
\end{align}
for all $a, a'\in A$ and $g, h\in H$, where we write $a\#h$ for the tensor product $a\otimes h$. We call this algebra the \textit{smash product} of $A$ and $H$.\\

Dually, given a Hopf algebra $H$, the \textit{right $H$-comodule} is vector space $V$, together with a coaction, which is a linear map $\delta: V\to V\otimes H$ such that
\begin{align}\label{equ. coaction}
    (\id\otimes \Delta)\circ\delta=(\delta\otimes \id)\circ \delta,\qquad (\id\otimes \epsilon)\circ\delta=\id.
\end{align}
We also use the sumless Sweedler notation to denote the image of coaction, i.e. $\delta(v)=\zero{v}\otimes \one{v}$
 for all $v\in V$. The morphism between two right $H$-comodules $V$ and $W$ is a linear map $f: V\to W$, such that
 \begin{align}
     f(\zero{v})\otimes \one{v}=\zero{f(v)}\otimes \one{f(v)}
 \end{align}
for any $v\in V$ . \\

Given two right $H$-comodules $V$ and $W$, the tensor product $V\otimes W$ is also a right $H$-comodule with the coaction given by
\begin{align}
    \delta^{diag}(v\otimes w):=\zero{v}\otimes \zero{w}\otimes \one{v}\one{w}
\end{align}
for all $v\in V$ and $w\in W$.\\

Given a Hopf algebra $H$, a \textit{right $H$-comodule algebra} is an algebra $A$, such that $A$ is a right $H$-comodule, and the comodule map $\delta: A\to A\otimes H$ is an algebra map. The morphism between two comodule algebras is both a comodule map and an algebra map, where $A\ot  H$ has the usual tensor
product algebra structure.

\subsection{Cleft Hopf Galois extensions}\label{sec:chge}

In this section, we will recall the definition of Hopf Galois extensions. In particular, we will study cleft Hopf Galois extensions, which could be thought of as trivial noncommutative  principal bundles.

\begin{defi}
Let $A$ be a comodule algebra of a Hopf algebra $H$, with the coinvariant subalgebra $B:=\big\{b\in A ~|~ \delta (b) = b \ot 1_H \big\} \subseteq A$, the extension $B=A^{coH}\subseteq A$ is called a \textup{Hopf Galois extension}, if the \textup{canonical map}
\begin{align}
    \chi: A\otimes_{B}A\to A\otimes H, \quad a\otimes_{B}a'\mapsto a\zero{a'}\otimes \one{a}
\end{align}
is bijective, where $\otimes_{B}$ is the balanced tensor product over $B$ (i.e. $ab\otimes_{B}a'=a\otimes_{B}ba'$ for all $a, a'\in A$ and $b\in B$).
\end{defi}

In the following, we will always assume that for any Hopf Galois extension $B=A^{coH}\subseteq A$, $A$ is a faithful flat left $B$ module.

Given a Hopf Galois extension $B=A^{coH}\subseteq A$, the \textit{translation map} is
\begin{align}\label{equ. canonical map}
    \tau:=\chi^{-1}|_{1\otimes H}:  H\to A\otimes_{B}A,\qquad h\mapsto \tuno{h}\otimes_{B}\tdue{h}.
\end{align}
Since the canonical map $\chi$ is left $B$ linear, then the inverse of the canonical map can be determined by the translation map. It is shown in \cite[Prop. 3.6]{brz-tr} and \cite[Lemma 34.4]{BW} that the translation map of a Hopf Galois extension satisfy the following properties:

\begin{align}\label{equ. translation map 1}
  \tuno{h} \ot_B \zero{\tdue{h}} \ot \one{\tdue{h}} &= \,\tuno{\one{h}} \ot_B \tdue{\one{h}} \ot
\two{h} ~,\\
\label{equ. translation map 2}
~~ \tuno{\two{h}}  \ot_B \tdue{\two{h}} \ot S(\one{h}) &= \zero{\tuno{h}} \ot_B {\tdue{h}}  \ot \one{\tuno{h}},\\
\label{equ. translation map 3}
\tuno{h}\zero{\tdue{h}}\ot \one{\tdue{h}} &= 1_{A} \ot h ~,\\
\label{equ. translation map 4}
    \zero{a}\tuno{\one{a}}\ot_{B}\tdue{\one{a}} &= 1_{A} \ot_{B}a ~,
\end{align}
for all $h\in H$ and $a\in A$.

Now we recall cleft Hopf Galois extensions.

\begin{defi}
A Hopf--Galois extension
$B=A^{co \, H}\subseteq A$ is \textup{cleft} if there is a convolution invertible right $H$ comodule map $\gamma: H\to A$.
\end{defi}

%\begin{defi}
A Hopf Galois extension has \textit{normal basis property}, if $A\simeq B\otimes H$ as left $B$-modules and right $H$-comodules.
%\end{defi}

\begin{defi}\label{definition. measure}
Given a Hopf algebra $H$ and an algebra $B$, we call  \textup{$H$ measures $B$}, if there is a linear map $H\otimes B\to B$, given by $h\otimes b\mapsto h\triangleright b$, such that
\begin{itemize}
    \item $h\triangleright 1=\epsilon(h)1$,
    \item $h\triangleright(bb')=(\one{h}\triangleright b)(\two{h} \triangleright b')$,
\end{itemize}
for all $h\in H$ and $b, b'\in A$.
\end{defi}

It was shown in \cite{DT86} that we can construct an algebra by $H$ and $A$, if $H$ measures $A$.

\begin{lem}\label{lemma. twisted smash product}
Let $H$ be a Hopf algebra, $B$ be an algebra and $H$ measures $B$. If there is a convolution invertible linear map $\sigma: H\otimes H\to B$ (i.e. there is a linear map $\sigma^{-1}: H\otimes H\to B$, such that $\sigma(\one{h}, \one{h'})\sigma^{-1}(\two{h}, \two{h'})=\epsilon(h)\epsilon(h')1=\sigma^{-1}(\one{h}, \one{h'})\sigma(\two{h}, \two{h'})$ for all $h, h'\in H$), such that
\begin{itemize}
    \item [(1)]$1\triangleright b=b$,
    \item [(2)]$h\triangleright(k\triangleright b)=\sigma(\one{h}, \one{k})(\two{h}\two{k}\triangleright b)\sigma^{-1}(\three{h}, \three{k})$,
    \item [(3)] $\sigma(h, 1)=\sigma(1, h)=\epsilon(h)1$,
    \item [(4)] $(\one{h}\triangleright\sigma(\one{k},\one{m}))\sigma(\two{h}, \two{k}\two{m})=\sigma(\one{h}, \one{k})\sigma(\two{h}\two{k}, m)$,
\end{itemize}
for all $h, k, m\in H$ and $b\in B$. Then there is an algebra structure on $B\#_{\sigma} H$, which is equal to $B\otimes H$ as vector space, with the product given by
\begin{align}\label{equation. product of twisted crossed product}
    (b\#_{\sigma} h)(b'\#_{\sigma} h')=b(\one{h}\triangleright b')\sigma(\two{h}, \one{h'})\#_{\sigma} \three{h}\two{h'},
\end{align}
for all $h, h'\in H$ and $b, b'\in B$. Here we have written $b\#_{\sigma} h$ for the tensor product $b\otimes h$. Conversely, assume $H$ measures $B$ and $\sigma: H\otimes H\to B$ is a convolution invertible linear map, if the vector space $B\#_{\sigma} H$ (equal to $B\otimes H$ as vector space) with a product defined by (\ref{equation. product of twisted crossed product}) is an associative algebra with identity element $1\#_{\sigma}1$, then we have the four conditions above.
\end{lem}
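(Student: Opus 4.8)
The plan is to verify the four conditions (1)--(4) by expanding both sides of the associativity relation $\big((b\#_\sigma h)(b'\#_\sigma h')\big)(b''\#_\sigma h'')=(b\#_\sigma h)\big((b'\#_\sigma h')(b''\#_\sigma h'')\big)$ and of the unit relations $(1\#_\sigma 1)(b\#_\sigma h)=b\#_\sigma h=(b\#_\sigma h)(1\#_\sigma 1)$, and then isolating each identity by specialising the arguments. First I would deal with the unit element: from $(1\#_\sigma 1)(b\#_\sigma h)=b\#_\sigma h$ one reads off, using $\epsilon$ on the $H$-leg, that $\sigma(1,h)=\epsilon(h)1$ and simultaneously $1\triangleright b=b$, which gives conditions (1) and the first half of (3); from $(b\#_\sigma h)(1\#_\sigma 1)=b\#_\sigma h$, after expanding one gets $b(\one h\triangleright 1)\sigma(\two h,1)\#_\sigma\three h=b\#_\sigma h$, and since $H$ measures $B$ forces $\one h\triangleright 1=\epsilon(\one h)1$, this collapses to $b\,\sigma(\one h,1)\#_\sigma\two h=b\#_\sigma h$, yielding $\sigma(h,1)=\epsilon(h)1$, i.e. the second half of (3).

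Next I would extract the cocycle-type conditions (2) and (4) from associativity. Computing the left-hand side $\big((b\#_\sigma h)(b'\#_\sigma h')\big)(b''\#_\sigma h'')$ gives, after one more application of the product rule and of the measuring axiom $k\triangleright(xy)=(\one k\triangleright x)(\two k\triangleright y)$ to expand $\three h\two{h'}\triangleright b''$ and the like,
\[
b(\one h\triangleright b')\sigma(\two h,\one{h'})\big(\three h\,\two{h'}\triangleright b''\big)\big(\four h\,\three{h'}\triangleright?\big)\cdots,
\]
and similarly the right-hand side; here the bookkeeping with Sweedler indices is genuinely the bulk of the work. The strategy to disentangle the two conditions is the usual one: set $b''=1$ (and use $\sigma(h,1)=\epsilon(h)1$, already established) to kill all terms involving $\sigma$ evaluated against a counit, so that the residual equality of the $B$-legs becomes precisely the twisted-module condition (2) once the $H$-legs are separated by applying $\epsilon$ judiciously; then, with (2) in hand, return to the general associativity identity, substitute (2) to rewrite the nested actions $\two h\,\two{h'}\triangleright b''$, cancel the common factors $b(\one h\triangleright b')$ and the $\sigma$-terms that (2) produces, and what remains is exactly the cocycle identity (4), again after separating the three $H$-legs.

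The main obstacle I anticipate is purely organisational rather than conceptual: keeping the (numerous) Sweedler components aligned correctly while expanding a triple product, since each application of (\ref{equation. product of twisted crossed product}) together with the measuring axiom multiplies the number of coproduct legs, and one must be careful that the specialisations $b''=1$, $b'=1$, etc., together with the normalisation $\sigma(\cdot,1)=\sigma(1,\cdot)=\epsilon(\cdot)1$, really do isolate one identity at a time without presupposing another. A secondary subtlety is that convolution invertibility of $\sigma$ is used precisely to cancel the $\sigma$-factors when passing from the raw associativity equality to the clean forms (2) and (4); I would make sure to invoke $\sigma\star\sigma^{-1}=\epsilon\otimes\epsilon=\sigma^{-1}\star\sigma$ at the step where a $\sigma(\two h,\one{h'})$ on one side must be matched against the other side. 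Once (1)--(4) are derived, the converse direction (that these conditions make (\ref{equation. product of twisted crossed product}) associative and unital) is the computation already referenced from \cite{DT86}, so I would simply cite it, the present statement being the "only if" half.
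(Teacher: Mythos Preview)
The paper does not actually prove this lemma: it is stated as a known result attributed to \cite{DT86}, with no argument given in the text. So there is no ``paper's own proof'' against which to compare your proposal.

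That said, your strategy is the standard one and is essentially correct, with one small slip. You write that setting $b''=1$ in the associativity identity isolates the twisted-module condition (2), but this is backwards. If you set $b''=1$ (and, say, $b=b'=1$), the left-hand side becomes $\sigma(\two h,\one{h'})\sigma(\three h\two{h'},\one{h''})\#_\sigma\cdots$ and the right-hand side becomes $(\two h\triangleright\sigma(\one{h'},\one{h''}))\sigma(\three h,\two{h'}\two{h''})\#_\sigma\cdots$, which after applying $\id\otimes\epsilon$ yields exactly the cocycle condition (4). To isolate (2) you should instead set $h''=1$ (together with $b=b'=1$): then the normalisation $\sigma(\,\cdot\,,1)=\epsilon(\,\cdot\,)1$ kills the outer $\sigma$-factors and you are left with $\sigma(\one h,\one{h'})(\two h\two{h'}\triangleright b'')=(\one h\triangleright(\one{h'}\triangleright b''))\sigma(\two h,\two{h'})$, from which convolution with $\sigma^{-1}$ on the right gives (2). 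Once you swap those two specialisations, your outline goes through; the derivations of (1) and (3) from the unit relations are fine as written.
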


The algebra $B\#_{\sigma}H$ given above is called the \textit{crossed product} of $B$ and $H$.
Let $H$ measures $B$, we call $\sigma$ an \textit{unital cocycle}, if $\sigma(h, 1)=\sigma(1, h)=\epsilon(h)1$ and
\begin{align}\label{equation. cocycle condition for H}
    (\one{h}\triangleright\sigma(\one{k},\one{m}))\sigma(\two{h}, \two{k}\two{m})=\sigma(\one{h}, \one{k})\sigma(\two{h}\two{k}, m),
\end{align}
for any all $h, k, m\in H$.

If $\sigma$ is trivial (i.e. $\sigma(h, h')=\epsilon(hh')1$), then from Lemma \ref{lemma. twisted smash product} we know that $B$ is a left module algebra of $H$ and $B\#H$ is the smash product of $B$ and $H$ given by (\ref{equ. smash product}).

It is known (cf. \cite{mont}, Proposition 7.2.7) that:
\begin{prop}\label{proposition. unital twist}
Let $H$ measures $B$ with a convolution invertible linear map $\sigma: H\otimes H\to B$, such that $B\#_{\sigma}H$ is an unital associative algebra, then we have the following properties of $\sigma$:
\begin{itemize}
    \item [(1)] $\sigma^{-1}(\one{h}, \one{k}\one{m})(\two{h}\triangleright \sigma^{-1}(\two{k}, \two{m}))=\sigma^{-1}(\one{h}\one{k}, m)\sigma^{-1}(\two{h}, \two{k})$,
    \item[(2)] $h\triangleright \sigma(k, m)=\sigma(\one{h}, \one{k})\sigma(\two{h}\two{k}, \one{m})\sigma^{-1}(\three{h}, \three{k}\two{m})$,
    \item[(3)] $h\triangleright \sigma^{-1}(k, m)=\sigma(\one{h}, \one{k}\one{m})\sigma^{-1}(\two{h}\two{k}, \two{m})\sigma^{-1}(\three{h}, \three{k})$,
    \item[(4)] $(\one{h}\triangleright \sigma^{-1}(S(\four{h}), \five{h}))\sigma(\two{h}, S(\three{h}))=\epsilon(h)1$.
\end{itemize}
\begin{proof}
We can see that (1) and (2) can be derived from Lemma \ref{lemma. twisted smash product}, and (3) can be derived from (1). Now let's check (4):
\begin{align*}
    &(\one{h}\triangleright \sigma^{-1}(S(\four{h}), \five{h}))\sigma(\two{h}, \three{h})\\
    &=\sigma(\one{h}, S(\eight{h})\nine{h})\sigma^{-1}(\two{h}S(\seven{h}), \ten{h})\sigma^{-1}(\three{h}, S(\six{h}))\sigma(\four{h}, S(\five{h}))\\
    &=\sigma(\one{h}, S(\four{h})\five{h})\sigma^{-1}(\two{h}S(\three{h}), \six{h})\\
    &=\epsilon(h)1,
\end{align*}
for any $h\in H$.
\end{proof}

\end{prop}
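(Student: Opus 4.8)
The overall strategy is to exploit associativity of the product (\ref{equation. product of twisted crossed product}) on $B\#_{\sigma}H$, together with the defining relations of a crossed product from Lemma \ref{lemma. twisted smash product}, and to read off the four identities by applying suitable elements, counits and antipodes. For part (1), I would start from the cocycle condition (\ref{equation. cocycle condition for H}) for $\sigma$ and convolution-invert it: apply $\sigma^{-1}$ on the appropriate legs of both sides and use the measuring axiom $h\triangleright(bb')=(\one h\triangleright b)(\two h\triangleright b')$ together with $h\triangleright 1=\epsilon(h)1$ to move the inverse through the action. Concretely, one pairs the cocycle identity with three copies of $\sigma^{-1}$ (one for each ``slot'') and collapses using the invertibility relation $\sigma(\one h,\one k)\sigma^{-1}(\two h,\two k)=\epsilon(h)\epsilon(k)1$; this is the standard ``inverse of a $2$-cocycle is a $2$-cocycle'' computation and yields (1).

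For (2), the cleanest route is to compute $h\triangleright\sigma(k,m)$ by writing $\sigma(k,m)$ itself via condition (2) of Lemma \ref{lemma. twisted smash product} applied in reverse, or more directly: use that $(1\#_\sigma h)\big((1\#_\sigma k)(1\#_\sigma m)\big)=\big((1\#_\sigma h)(1\#_\sigma k)\big)(1\#_\sigma m)$ and expand both sides using (\ref{equation. product of twisted crossed product}); comparing the $B$-components (after applying $\id\otimes\epsilon$ to kill the $H$-leg) gives an identity relating $h\triangleright\sigma(k,m)$ to products of $\sigma$'s and one $\sigma^{-1}$, which after using the twisted-module property (condition (2) of Lemma \ref{lemma. twisted smash product}) rearranges into the stated formula. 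Then (3) follows from (2) by convolution-inverting, exactly as the text claims: replace $\sigma$ by $\sigma^{-1}$ in the bookkeeping and use (1) plus the invertibility relations.

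Part (4) is the one the author writes out, and it is the genuine obstacle because it chains together (2) and (3) with the antipode axioms; the bookkeeping with twelve Sweedler legs is where care is needed. Following the displayed computation: apply (3) to rewrite $\one h\triangleright\sigma^{-1}(S(\four h),\five h)$ as a product of two $\sigma$'s and a $\sigma^{-1}$ with many Sweedler indices, multiply on the right by $\sigma(\two h,S(\three h))$, and then telescope. The key cancellations are: an adjacent pair $\sigma^{-1}(\cdots,S(\six h))\sigma(\cdots,S(\five h))$ collapses after using $S(\four h)\five h=\epsilon(\text{that leg})1$ to merge the antipode-adjacent legs, then $\sigma(\one h, S(\cdot)\,\cdot)$ with the merged leg becomes $\sigma(\one h, 1)=\epsilon(\cdot)1$ by the unital normalisation $\sigma(h,1)=\epsilon(h)1$, and finally the surviving $\sigma^{-1}(\two h S(\three h),\cdot)$ collapses by $\two h S(\three h)=\epsilon(\cdot)1$ and again unitality, leaving $\epsilon(h)1$. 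The main thing to get right is the order in which one applies $\Delta$-coassociativity and the antipode relation $\one h S(\two h)=\epsilon(h)1$ so that the antipodes sit next to the right factors; once the Sweedler indices are aligned, each step is a one-line application of a single axiom. I would also record at the start that (1)–(3) are consequences of Lemma \ref{lemma. twisted smash product} and Proposition 7.2.7 of \cite{mont}, so that only (4) needs the explicit verification given.
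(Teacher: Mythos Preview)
Your plan is correct and mirrors the paper's proof: (1)--(3) are reduced to Lemma~\ref{lemma. twisted smash product} (with (3) obtained by convolution-inversion, which is equivalent to the paper's ``(3) from (1)''), and (4) is handled exactly as in the paper by expanding $\one{h}\triangleright\sigma^{-1}(S(\four{h}),\five{h})$ via (3) and then telescoping with the antipode and unitality relations. One small correction to your description of the telescoping: the pair $\sigma^{-1}(\three{h},S(\six{h}))\sigma(\four{h},S(\five{h}))$ collapses by the convolution-inverse identity $\sigma^{-1}\star\sigma=\tilde\epsilon$ (using $\Delta\circ S=(S\otimes S)\circ\Delta^{\mathrm{op}}$ to align the Sweedler legs), not by the antipode relation $S(\cdot)\,\cdot=\epsilon(\cdot)1$; the latter is what kills the remaining factors $\sigma(\one{h},S(\four{h})\five{h})$ and $\sigma^{-1}(\two{h}S(\three{h}),\six{h})$ afterward.
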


We can see that $B=(B\#_{\sigma}H)^{coH}\subseteq B\#_{\sigma}H$ is a Hopf Galois extension with the coaction $\delta(b\#_{\sigma}h):=b\#_{\sigma}\one{h}\otimes \two{h}$ for all $b\#_{\sigma}h\in B\#_{\sigma}H$. The inverse of its canonical map is given by \cite{mont}
\begin{align}\label{equ. inverse of canonical map of cleft extension}
    \chi^{-1}(b\#_{\sigma}g\otimes h)=(b\#_{\sigma}g)(\sigma^{-1}(S(\two{h}), \three{h})\#_{\sigma} S(\one{h}))\otimes_{B}1\#_{\sigma}\four{h},
\end{align}
for all $b\in B$ and $g, h\in H$. Indeed, we can check that
\begin{align*}
    &\chi\big((b\#_{\sigma}g)(\sigma^{-1}(S(\two{h}), \three{h})\#_{\sigma}S(\one{h}))\otimes_{B}1\#_{\sigma}\four{h}\big)\\
    &=(b\#_{\sigma}g)\big(\sigma^{-1}(S(\three{h}), \four{h})\sigma(S(\two{h}), \five{h})\#_{\sigma}S(\one{h})\six{h}\big)\otimes \seven{h}\\
    &=b\#_{\sigma}g\otimes h.
\end{align*}
We also have
\begin{align*}
&\chi^{-1}(\chi(b\#_{\sigma}g\otimes_{B}b'\#_{\sigma}h))\\
&=(b\#_{\sigma}g)(b'\#_{\sigma}\one{h})(\sigma^{-1}(S(\three{h}), \four{h})\#_{\sigma}S(\two{h}))\otimes_{B} 1\#_{\sigma}\five{h}\\
&=(b\#_{\sigma}g)\big(b'(\one{h}\triangleright \sigma^{-1}(S(\six{h}), \seven{h}))\sigma(\two{h}, S(\five{h}))\#_{\sigma}\three{h}S(\four{h})\big)\otimes_{B}1\#_{\sigma}\eight{h}\\
&=(b\#_{\sigma}g)(b'\#_{\sigma}1)\otimes_{B}1\#_{\sigma}h\\
&=b\#_{\sigma}g\otimes_{B}b'\#_{\sigma}h,
\end{align*}
for all $b, b'\in B$ and $g, h\in H$, where the third step uses Proposition \ref{proposition. unital twist}.
It is known (cf. \cite{mont}, Theorem 8.2.4) that:
\begin{thm}\label{theorem. cleft extension}
Let $B=A^{co \, H}\subseteq A$ be a Hopf Galois extension. Then the following are equivalent:
\begin{itemize}
    \item $B=A^{co \, H}\subseteq A$ is cleft.
    \item $B=A^{co \, H}\subseteq A$ has normal basis property.
    \item $A\simeq B\#_{\sigma}H$ as left $B$-modules and right $H$-comodule algebras.
\end{itemize}
\end{thm}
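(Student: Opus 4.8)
The plan is to prove the three statements equivalent through the cycle: $A/B$ cleft $\Longrightarrow$ $A\simeq B\#_{\sigma}H$ as left $B$-modules and right $H$-comodule algebras $\Longrightarrow$ $A/B$ has the normal basis property $\Longrightarrow$ $A/B$ cleft. The middle implication costs nothing: by construction $B\#_{\sigma}H$ equals $B\otimes H$ as a left $B$-module, the left $B$-action being $b'(b\#_{\sigma}h)=b'b\#_{\sigma}h$, and with its coaction $b\#_{\sigma}h\mapsto b\#_{\sigma}\one{h}\otimes\two{h}$ it is $B\otimes H$ carrying $\id_{B}\otimes\Delta$; so an isomorphism $A\simeq B\#_{\sigma}H$ of left $B$-modules and right $H$-comodule algebras is in particular an isomorphism $A\simeq B\otimes H$ of left $B$-modules and right $H$-comodules.

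For the first implication I would start from a convolution invertible right $H$-comodule map $\gamma\colon H\to A$. From colinearity of $\gamma$ and of $\gamma^{-1}$ one gets $\gamma(1),\gamma^{-1}(1)\in B$, and they are mutually inverse; replacing $\gamma$ by $\gamma^{-1}(1)\gamma(-)$ (still colinear and convolution invertible) one may assume $\gamma(1)=1$. Using the identity $\delta(\gamma^{-1}(h))=\gamma^{-1}(\two{h})\otimes S(\one{h})$ I would then check that
\[
h\triangleright b:=\gamma(\one{h})\,b\,\gamma^{-1}(\two{h}),\qquad \sigma(h,k):=\gamma(\one{h})\gamma(\one{k})\,\gamma^{-1}(\two{h}\,\two{k})
\]
land in $B$, that $H$ measures $B$ via $\triangleright$, and that $\sigma$ is a convolution invertible unital cocycle in the sense of Lemma~\ref{lemma. twisted smash product} — these are the classical Doi--Takeuchi verifications. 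Hence $B\#_{\sigma}H$ is an associative unital algebra, and the map $\Theta\colon B\#_{\sigma}H\to A$, $b\#_{\sigma}h\mapsto b\gamma(h)$, is visibly left $B$-linear and right $H$-colinear; multiplicativity follows from $\gamma(h)b=(\one{h}\triangleright b)\gamma(\two{h})$ and $\gamma(h)\gamma(k)=\sigma(\one{h},\one{k})\gamma(\two{h}\,\two{k})$. Finally $\Theta$ is invertible, with inverse $a\mapsto\zero{a}\gamma^{-1}(\one{a})\#_{\sigma}\two{a}$: one checks $\zero{a}\gamma^{-1}(\one{a})\in B$ from the formula for $\delta(\gamma^{-1})$ and then that the two maps compose to the identity on both sides by short Sweedler computations.

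For the last implication, fix an isomorphism $\phi\colon B\otimes H\to A$ of left $B$-modules and right $H$-comodules, and put $\gamma:=\phi(1\otimes-)$, a right $H$-comodule map with $\phi(b\otimes h)=b\gamma(h)$. Set $\Psi:=\phi^{-1}$ and $\pi:=(\id_{B}\otimes\epsilon)\circ\Psi\colon A\to B$. Left $B$-linearity and colinearity of $\phi$, together with $\Psi\circ\phi=\id$, give three facts: $\pi$ is left $B$-linear; $\pi\circ\gamma=\epsilon(-)1_{B}$; and $\Psi(a)=\pi(\zero{a})\otimes\one{a}$, so that $\pi(\zero{a})\gamma(\one{a})=a$ for all $a\in A$. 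Now I would bring in the Hopf--Galois hypothesis through the translation map $h\mapsto\tuno{h}\otimes_{B}\tdue{h}$ of \eqref{equ. canonical map} and define
\[
g\colon H\to A,\qquad g(h):=\tuno{h}\,\pi(\tdue{h}),
\]
which is well defined since $\pi$ is left $B$-linear. Contracting the identity \eqref{equ. translation map 1} along $x\otimes_{B}y\otimes u\mapsto x\,\pi(y)\,\gamma(u)$ and using $\pi(\zero{a})\gamma(\one{a})=a$ and $\tuno{h}\tdue{h}=\epsilon(h)1$ (the last obtained from \eqref{equ. translation map 3} by applying $\id\otimes\epsilon$) yields $g\star\gamma=\epsilon(-)1_{A}$; contracting \eqref{equ. translation map 2} along $x\otimes_{B}y\otimes u\mapsto x\,\pi(y)\otimes u$ yields $\delta(g(h))=g(\two{h})\otimes S(\one{h})$. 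From the latter, $\delta\big(\gamma(\one{h})g(\two{h})\big)=\gamma(\one{h})g(\two{h})\otimes1$, so $e:=\gamma\star g$ takes values in $B$; and $g\star\gamma=\epsilon(-)1_{A}$ gives $e\star\gamma=\gamma$, i.e. $e(\one{h})\gamma(\two{h})=\gamma(h)$, to which applying $\Psi$ and using left $B$-linearity of $\Psi$ and $\Psi(\gamma(k))=1\otimes k$ forces $e(h)=\epsilon(h)1$. Thus $g$ is a two-sided convolution inverse of the comodule map $\gamma$, so $A/B$ is cleft.

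The routine parts are the cocycle identities for the pair $(\triangleright,\sigma)$ and the two mutual-inverse checks for $\Theta$ in the first implication; these are bookkeeping with the convolution-inverse identities. The step I expect to be the real obstacle is the last implication — this is where bijectivity of the canonical map is genuinely used. The difficulty is twofold: guessing the correct formula $g(h)=\tuno{h}\pi(\tdue{h})$ for the convolution inverse of $\gamma$, and, because $H$ is not assumed cocommutative, keeping strict control of the order of the tensor legs when contracting the translation-map identities \eqref{equ. translation map 1}--\eqref{equ. translation map 3}, especially in producing $\delta(g(h))=g(\two{h})\otimes S(\one{h})$ and in upgrading the one-sided convolution inverse to a two-sided one.
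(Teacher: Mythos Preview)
The paper does not prove this theorem: it is stated with the attribution ``It is known (cf.\ \cite{mont}, Theorem 8.2.4)'' and no argument is given. So there is no in-paper proof to compare against; the relevant benchmark is the standard Doi--Takeuchi/Montgomery argument, and your proposal follows precisely that route. Your cycle of implications, the construction of $(\triangleright,\sigma)$ from the cleaving map, the isomorphism $\Theta(b\#_{\sigma}h)=b\gamma(h)$ with inverse $a\mapsto \zero{a}\gamma^{-1}(\one{a})\#_{\sigma}\two{a}$, and the use of the translation map to build the convolution inverse $g(h)=\tuno{h}\,\pi(\tdue{h})$ in the last implication are all the classical steps, and your contractions of \eqref{equ. translation map 1}--\eqref{equ. translation map 3} are set up correctly.

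One small wording point: you write ``from colinearity of $\gamma$ and of $\gamma^{-1}$'', but $\gamma^{-1}$ is not assumed colinear; the identity $\delta(\gamma^{-1}(h))=\gamma^{-1}(\two{h})\otimes S(\one{h})$ is a consequence of the colinearity of $\gamma$ together with convolution invertibility, and it is what actually gives $\gamma^{-1}(1)\in B$. You clearly know this since you use it later, so just adjust the phrasing. Otherwise the plan is sound and matches the reference the paper invokes.
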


\subsection{Bialgebroids}

Here we also give an introduction of bialgebroids (cf. \cite{BW}, \cite{Boehm}).
%\begin{defi}\label{def:B-ring}
For an algebra $B$, a {\em $B$-ring} is a triple
$(A,\mu,\eta)$. Here $A$ is a $B$-bimodule and $\mu:A\ot_ {B} A \to A$ and
$\eta:B\to A$ are $B$-bimodule maps, satisfying the associativity and unit
conditions
\begin{equation}\label{eq:cat.associ}
\mu\circ(\mu\otimes_{B} \id_{A})=\mu\circ (\id_{A}\otimes_{B} \mu)\qquad \textrm{and}\qquad
\mu\circ(\eta \otimes_{B} \id_{A})=\id_{A}=\mu\circ (\id_{A}\otimes_{B} \eta).
\end{equation}
A {\em morphism of $B$-rings} $f:(A,\mu,\eta)\to (A',\mu',\eta')$ is an
$B$-bimodule map $f:A \to A'$, such that
$f\circ \mu=\mu'\circ(f\ot_{B} f)$ and $f\circ \eta=\eta'$.
%\end{defi}

\begin{rem}\label{rring}
Let $B$ and $A$ be algebras, if there is an algebra map $\eta: B\to A$, then $A$ is a $B$-bimodule with $b\triangleright a\triangleleft b'=\eta(b)a\eta(b')$. Moreover, $A$ is a $B$-ring with the product obtained from the universality of the coequaliser $A \ot A \to A\ot_B A$ which identifies an element $ a b \ot a'$ with $ a \ot b a'$.
\end{rem}

%\begin{defi}\label{def:R-coring}
For an algebra $B$, a {\em $B$-coring} is a
triple $(C,\Delta,\epsilon)$. Here $C$ is an $B$-bimodule and $\Delta:C\to
C\ot_{B} C$ and $\epsilon: C\to B$ are $B$-bimodule maps, satisfying the
coassociativity and counit conditions.
\begin{equation}\label{eq:cat.coassoci}
(\Delta\otimes_{B} \id_{C})\circ \Delta=(\id_{C}\otimes_{B} \Delta)\circ \Delta
\qquad \textrm{and}\qquad
(\epsilon \otimes_{B} \id_{C})\circ \Delta=\id_{C}=(\id_{C}\otimes_{B} \epsilon)\circ \Delta.
\end{equation}
A {\em morphism of $B$-corings} $f:(C,\Delta,\epsilon)\to
(C',\Delta',\epsilon')$ is a $B$-bimodule map $f:C \to C'$, such that
$\Delta'\circ f=(f\ot_{B} f)\circ \Delta$ and
$\epsilon' \circ f =\epsilon$.
%\end{defi}

\begin{defi}\label{def:right.bgd}
Let $B$ be an algebra.
A {\em left $B$-bialgebroid} $\cL$ consists of a $B\otimes B^{op}$-ring $\cL$ with the unit $\eta: B\otimes B^{op}\to \cL$. The restrictions of $\eta$
$$
s := \eta ( \, \cdot \, \ot_B 1_B ) : B \to \cL \quad \mbox{and} \quad t := \eta ( 1_B \ot_B  \, \cdot \, ) : B^{op} \to \cL
$$
are called source and target map, with their ranges commute in $B$.

Moreover, $\cL$ is a $B$-coring $(\cL,\Delta,\epsilon)$ on the same vector space
$\cL$. They are subject to the following compatibility axioms.
\begin{itemize}
\item[(i)] The bimodule structure in the  $B$-coring $(\cL,\Delta,\epsilon)$ is
  related to the $B\ot B^{op}$-ring $\cL$ via
\begin{equation}\label{eq:rbgd.bimod}
b\triangleright a\triangleleft b':= s(b) t(b')a,\quad \textrm{for }b,b'\in B,\ a\in \cL.
\end{equation}
\item[(ii)] Considering $\cL$ as an $B$-bimodule as in \eqref{eq:rbgd.bimod},
  the coproduct $\Delta$ corestricts to an algebra map from $\cL$ to
\begin{equation}\label{eq:Tak.prod}
\cL\times_{B} \cL :=\{\ \sum_i a_i \ot_{B} a'_i\ |\ \sum_i a_it(b) \ot_{B} a'_i=
\sum_i a_i \ot_{B}  a'_i  s(b),\quad \forall b\in B\ \},
\end{equation}
where $\cL\times_B \cL$ is an algebra via factorwise multiplication.
\item[(iii)] The counit $\epsilon$ is a left character on the $B$-ring $(\cL,s)$:
\begin{itemize}
\item[(1)] $\epsilon(1_{\cL})=1_{B}$.
\item[(2)] $\epsilon( s(b)a)=b\epsilon(a)$,
\item[(3)] $\epsilon(as(\epsilon(a')))=\epsilon(aa')=\epsilon(at (\epsilon(a')))$,
\end{itemize}
for all $a, a'\in \cL$ and $b\in B$.
\end{itemize}
\end{defi}

Morphisms between left $B$ bialgebroids are $B$-coring maps which are also algebra maps.
Given a left $B$-bialgebroid $\cL$, then there is an algebra structure on ${}_{B}\Hom_{B}(\cL\otimes_{B\otimes B^{op}}\cL, B)$ with the (convolution) product given by
\begin{align}
    f\star g(a, a'):=f(\one{a}, \one{a'})g(\two{a}, \two{a'}),
\end{align}
for all $a, a'\in \cL$ and $f, g\in {}_{B}\Hom_{B}(\cL\otimes_{B\otimes B^{op}}\cL, B)$.
The unit of this algebra is $\tilde{\epsilon}: a\otimes a'\mapsto \epsilon(aa')$. Moreover, the $B$-bimodule structure on $\cL\otimes_{B\otimes B^{op}}\cL$ is give by $b\triangleright(a\otimes a')\triangleleft b'=s(b)t(b')a\otimes a'$, for all $b, b'\in B$, and the balanced tensor product $\cL\otimes_{B\otimes B^{op}}\cL$ is induced by the algebra map $\eta: B\otimes B^{op}\to \cL$.

\begin{defi}
Let $\cL$ be a left $B$-bialgebroid, an \textup{invertible normalised 2-cocycle} on
$\cL$ is a convolution invertible element $\tilde{\sigma}\in {}_{B}\Hom_{B}(\cL\otimes_{B\otimes B^{op}}\cL, B)$, such that
\begin{itemize}
    \item [(1)] $\tilde{\sigma}(s(b)t(b')a, a')=b\tilde{\sigma}(a, a')b'$ (bilinearity),
    \item[(2)] $\tilde{\sigma}(a, s(\tilde{\sigma}(\one{a'}, \one{a''}))\two{a'}\two{a''})=\tilde{\sigma}(s(\tilde{\sigma}(\one{a}, \one{a'}))\two{a}\two{a'}, a'')$ (cocycle condition).
    \item[(3)] $\title{\sigma}(1_{\cL}, a)=\epsilon(a)=\title{\sigma}(a, 1_{\cL})$ (normalisation),
\end{itemize}
for all $a, a', a''\in \cL$ and $b, b'\in B$.
\end{defi}

It is known in \cite{Boehm}, we can define a new left bialgebroid by an invertible normalised 2-cocycle.

\begin{prop}\label{proposition. 2 cocycle twist}
Let $\cL$ be a left $B$-bialgebroid and $\tilde{\sigma}\in {}_{B}\Hom_{B}(\cL\otimes_{B\otimes B^{op}}\cL, B)$ be an invertible normalised 2-cocycle, with inverse $\tilde{\sigma}^{-1}$, then the $B$-coring $\cL$ with the twisted product
\begin{align}
    a\cdot_{\tilde{\sigma}} a':=s(\tilde{\sigma}(\one{a}, \one{a'}))t(\tilde{\sigma}^{-1}(\three{a}, \three{a'}))\two{a}\two{a'},
\end{align}
for all $a, a'\in \cL$, constitute a left $B$-bialgebroid $\cL^{\tilde{\sigma}}$.
\end{prop}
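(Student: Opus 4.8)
The plan is to verify the three bialgebroid axioms for $\cL^{\tilde\sigma}$ directly, treating the underlying $B$-coring structure as fixed and only checking that the twisted product $\cdot_{\tilde\sigma}$ interacts correctly with it. First I would check that $\cdot_{\tilde\sigma}$ is well defined on the relevant balanced tensor products: since $\tilde\sigma$ is $B$-bilinear in the sense of axiom (1) of the definition of an invertible normalised 2-cocycle, and the source and target maps have commuting ranges, the expression $s(\tilde\sigma(\one a,\one{a'}))\,t(\tilde\sigma^{-1}(\three a,\three{a'}))\,\two a\,\two{a'}$ descends to $\cL\otimes_{B\otimes B^{op}}\cL$; this is the same kind of check that makes the convolution product on ${}_B\Hom_B(\cL\otimes_{B\otimes B^{op}}\cL,B)$ well defined, so I would import it from the discussion preceding the definition.

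Next I would establish associativity of $\cdot_{\tilde\sigma}$, which is the technical heart of the argument. Expanding $(a\cdot_{\tilde\sigma}a')\cdot_{\tilde\sigma}a''$ and $a\cdot_{\tilde\sigma}(a'\cdot_{\tilde\sigma}a'')$ using coassociativity of $\Delta$ and the fact that $\Delta$ is an algebra map into the Takeuchi product $\cL\times_B\cL$, one reduces both sides to expressions involving $\tilde\sigma$ evaluated on nested coproducts; the cocycle condition (axiom (2)) is exactly what is needed to match them, once one also uses that $s$ and $t$ are algebra (resp. anti-algebra) maps with commuting images and that $s(b)$, $t(b')$ can be moved past the $\one a\cdots$ factors using the Takeuchi relation $a_i t(b)\otimes_B a_i' = a_i\otimes_B a_i' s(b)$. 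I would also need the inverse cocycle identity: applying $\star$-inverse to the cocycle condition yields the companion relation for $\tilde\sigma^{-1}$, and both are used to reconcile the $s(\tilde\sigma(\cdots))$ and $t(\tilde\sigma^{-1}(\cdots))$ prefactors on the two sides. The unit $1_\cL$ remains a unit for $\cdot_{\tilde\sigma}$ by the normalisation axiom (3) together with the counit axioms for $\epsilon$.

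Then I would check that $\Delta$ is still an algebra map from $\cL^{\tilde\sigma}$ into $\cL^{\tilde\sigma}\times_B\cL^{\tilde\sigma}$: since the coproduct and counit are unchanged, coassociativity and the counit conditions are automatic, and the Takeuchi subspace is the same as before because the bimodule structure $b\triangleright a\triangleleft b' = s(b)t(b')a$ is unchanged (the source and target maps of $\cL^{\tilde\sigma}$ are $s$ and $t$ again, as one reads off from $\eta$ together with normalisation). The compatibility $\Delta(a\cdot_{\tilde\sigma}a')=\Delta(a)\cdot_{\tilde\sigma}\Delta(a')$ follows by the same coassociativity-and-cocycle bookkeeping as associativity, applied one slot deeper. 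Finally the left character axioms for $\epsilon$ on $(\cL^{\tilde\sigma},s)$: $\epsilon(1_\cL)=1_B$ is unchanged, $\epsilon(s(b)\cdot_{\tilde\sigma}a)=b\,\epsilon(a)$ and the third identity follow from the original left character axioms together with normalisation of $\tilde\sigma$ (so that $\tilde\sigma(s(b)\one a,\one{a'})$ collapses appropriately) and $B$-bilinearity.

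The main obstacle I expect is the associativity computation: keeping track of which Sweedler leg of $a,a',a''$ each $\tilde\sigma$ and $\tilde\sigma^{-1}$ factor sees after two applications of the coproduct, and correctly commuting the $s$- and $t$-prefactors through the product using the Takeuchi relation, is where the cocycle condition must be invoked in precisely the right form; a clean bookkeeping convention (and the derived $\tilde\sigma^{-1}$ cocycle identity) will be essential to avoid sign-free but index-heavy errors. This is, however, exactly the content of the cited result in \cite{Boehm}, so I would present the verification as a guided computation rather than reproving it from scratch.
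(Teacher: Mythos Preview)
Your proposal is correct, but note that the paper does not actually prove this proposition: it simply states the result as known and cites \cite{Boehm}. Your outline of a direct verification of the bialgebroid axioms is the standard approach (and is what one finds in B\"ohm's handbook), so in effect you are supplying more detail than the paper itself; you even acknowledge this in your final sentence.
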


The proposition above can also apply to Hopf algebras, which are left bialgebroid over $\mathbb{C}$.

\section{Ehresmann-Schauenburg bialgebroids}

Ehresmann Schauenburg Bialgebroids can be viewed as the quantisation of Gauge groupoids associated to principal bundles. Here we will first give the definition of Ehresmann Schauenburg Bialgebroids \cite[\S 34.13]{BW}.

\begin{defi}\label{def:ec}
Let $B=A^{co H}\subseteq A$ be a Hopf Galois extension such that $A$ is a faithful flat left $B$-module, the $B$-bimodule

\begin{equation}\label{ec1}
\mathcal{C}(A, H) :=\{a\otimes \tilde{a}\in A\otimes A\quad|\quad \zero{a}\otimes \tau(\one{a})\tilde{a}=a\otimes \tilde{a}\otimes_B 1\}.
\end{equation}
 is a $B$-coring with the coring product and counit given by
\beq\label{copro}
\Delta(a\otimes \tilde{a})=\zero{a}\otimes \tau(\one{a})\otimes\tilde{a},
\eeq
\beq\label{counit}
\epsilon(a\otimes \tilde{a})=a\tilde{a}.
\eeq
Moreover, $\C(A, H)$ is a $B\otimes B^{op}$-ring with the product given by

\beq\label{pro}
(a\otimes \tilde{a})\bullet_{\mathcal{C}}({a}^{'}\otimes \tilde{a}^{'})=aa^{'}\otimes\tilde{a}^{'}\tilde{a} ,
\eeq
for all $a\otimes \tilde{a}$, $a^{'}\otimes \tilde{a}^{'} \in \mathcal{C}$. The source and target maps are

\beq\label{sourcemap}
s(b)=b\otimes 1,
\eeq
\beq\label{targetmap}
t(b)=1\otimes b.
\eeq
All of the structures given above form a left $B$-bialgebroid, which is called \textup{Ehresmann Schauenburg bialgebroid}.
\end{defi}

It was shown in \cite{BW}, \cite{HL20} that the $B$-bimodule $\C(A, H)$ is isomorphic to the $B$-bimodule of coinvariant elements, that is

\beq
\C(A, H)\simeq(A\ot A)^{coH} := \{a\ot  \tilde{a}\in A\ot  A \, \quad|\quad \zero{a}\ot  \zero{\tilde{a}}\ot  \one{a}\one{\tilde{a}}=a\ot  \tilde{a}\ot  1_H \}. \label{ec2}
\eeq

\subsection{2-cocycles on Ehrenmann-Schauenburg bialgebroids}

Let $B=A^{coH}\subseteq A$ be a cleft Hopf Galois extension, then by Theorem \ref{theorem. cleft extension} we know  $\C(A, H)\simeq \C(B\#_{\sigma} H, H)$. In particular, $B=(B\# H)^{coH}\subseteq B\# H$ is a cleft Hopf Galois extension. From (\ref{ec2}) we can see
\begin{align}
    \C(B\# H, H)=\{b\#\one{h}\otimes b'\# S(\two{h})\quad|\quad \forall  b, b'\in B \quad \textup{and}\quad  \forall h\in H\}.
\end{align}
We can also see the coproduct is given by
\begin{align}
    \Delta(b\#\one{h}\otimes b'\# S(\two{h}))=b\#\one{h}\otimes 1\# S(\two{h})\otimes_{B} 1\#\three{h}\otimes b'\# S(\four{h}),
\end{align}
since from (\ref{equ. inverse of canonical map of cleft extension}) we know the translation map $\tau: H\to B\# H\otimes_{B}B\# H$ is
\begin{align}
    \tau(h)=1\#S(\one{h})\otimes_{B}1\#\two{h},
\end{align}
for all $h\in H$.
\begin{lem}\label{lemma. construction of 2 cocycle}
For a crossed product $B\#_{\sigma} H$, with $H$ being cocommutative and the image of $\sigma$ belonging to the centre of $B$, then the linear map $\tilde{\sigma}\in {}_{B}\Hom_{B}(\C(B\# H, H)\otimes_{B\otimes B^{op}}\C(B\# H, H), B)$ given by
\begin{align}\label{equ. 2-cocycle on bialgebroid}
    \tilde{\sigma}(b\#\one{h}\otimes b'\# S(\two{h}), c\#\one{g}\otimes c'\# S(\two{g})):=b(\one{h}\triangleright c)\sigma(\two{h}, \one{g})((\three{h}\two{g})\triangleright c')(\four{h}\triangleright b'),
\end{align}
for all $b\#\one{h}\otimes b'\# S(\two{h})$, $c\#\one{g}\otimes c'\# S(\two{g})\in \C(B\# H, H)$
is  an  invertible  normalised 2-cocycle.
\end{lem}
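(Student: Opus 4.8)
The plan is to verify \eqref{equ. 2-cocycle on bialgebroid} directly against the definition of an invertible normalised $2$-cocycle, using the explicit descriptions of $\C(B\# H,H)$, of its coproduct and counit, and of the translation map $\tau(h)=1\# S(\one{h})\ot_{B}1\# \two{h}$ recalled just before the statement. One has to check that \eqref{equ. 2-cocycle on bialgebroid} is a well defined element of ${}_{B}\Hom_{B}(\C(B\# H,H)\ot_{B\ot B^{op}}\C(B\# H,H),B)$, that it is convolution invertible, and that conditions (1), (2), (3) of the definition hold. The two standing hypotheses play clearly separated roles: cocommutativity of $H$ — used in the forms $\one{h}\ot\two{h}=\two{h}\ot\one{h}$, $\Delta\circ S=(S\ot S)\circ\Delta$, and hence $S^{2}=\id$ — serves throughout to reindex Sweedler legs so that the iterated coproducts of $h,g,\dots$ on the two sides of each identity can be matched, while $\Ima\sigma\subseteq Z(B)$ is used to commute the values $\sigma(-,-)$, $\sigma^{-1}(-,-)$ past the elements of $B$ occurring in \eqref{equ. 2-cocycle on bialgebroid}; this last fact is also what makes \eqref{equ. 2-cocycle on bialgebroid} well defined on the balanced tensor product and compatible with condition (1).

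For bilinearity I compute $s(b)t(b')\bigl(c\# \one{g}\ot c'\# S(\two{g})\bigr)$ from the product \eqref{pro} and the smash product \eqref{equ. smash product}: it is again a generator of $\C(B\# H,H)$, with $c$ replaced by $bc$ and the second tensor factor modified by the target multiplication. Substituting into \eqref{equ. 2-cocycle on bialgebroid} and using the measuring axiom $h\triangleright(xy)=(\one{h}\triangleright x)(\two{h}\triangleright y)$ together with the $\one{h}\ot S(\two{h})$ cancellations isolates $b$ on the left and $b'$ on the right (the latter moved past the $\sigma$-factor by centrality), giving $b\,\tilde\sigma(-,-)\,b'$. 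Normalisation is immediate: with $b=b'=1$, $h=1$ (respectively $c=c'=1$, $g=1$) and $\sigma(1,h)=\sigma(h,1)=\epsilon(h)1$, formula \eqref{equ. 2-cocycle on bialgebroid} collapses to $c(g\triangleright c')$ (respectively $b(h\triangleright b')$), which is exactly the counit $\epsilon$ of $\C(B\# H,H)$ on the remaining argument.

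For invertibility I take as candidate inverse the map obtained from \eqref{equ. 2-cocycle on bialgebroid} by replacing $\sigma$ with $\sigma^{-1}$, with the Sweedler legs rearranged by cocommutativity so that, after applying the coproduct $\Delta(b\# \one{h}\ot b'\# S(\two{h}))=(b\# \one{h}\ot 1\# S(\two{h}))\ot_{B}(1\# \three{h}\ot b'\# S(\four{h}))$, the arguments of $\sigma$ and of $\sigma^{-1}$ coincide; then $\tilde\sigma\star\tilde\sigma^{-1}$ and $\tilde\sigma^{-1}\star\tilde\sigma$ each collapse, via cocommutativity, the convolution-inverse identity for $\sigma$ and centrality, to $\tilde\epsilon(a\ot a')=\epsilon(a\bullet_{\C}a')$, the unit of the convolution algebra. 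The cocycle identity (2) is the main step. I expand both sides: on the left one forms $\two{a'}\bullet_{\C}\two{a''}$, evaluates \eqref{equ. 2-cocycle on bialgebroid} on $(\one{a'},\one{a''})$, applies the source map, and feeds the outcome into $\tilde\sigma(a,-)$; symmetrically on the right. Using \eqref{equ. 2-cocycle on bialgebroid}, \eqref{pro} and \eqref{equ. smash product}, each side becomes a product in $B$ of the generators $b,c,d$, of iterated $\triangleright$-actions on $c',d',b'$, and of two values of $\sigma$; cocommutativity shows the $\triangleright$-parts agree after reindexing, centrality lets the two $\sigma$-values be collected side by side, and the cocycle condition \eqref{equation. cocycle condition for H} converts one side into the other.

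I expect the cocycle identity (2) to be the only real obstacle. It is not conceptually hard, but it genuinely uses both hypotheses — centrality to pull the $\sigma$-values out next to each other, cocommutativity to line up the iterated coproducts — and since every generator of $\C(B\# H,H)$ already carries a doubled leg $\one{h}\ot S(\two{h})$, which the coring coproduct and the two nested evaluations of $\tilde\sigma$ subdivide further, the computation requires a careful choice of Sweedler bookkeeping to bring \eqref{equation. cocycle condition for H} into play cleanly. The remaining items are routine once the explicit formulas are written out.
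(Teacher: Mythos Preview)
Your proposal is correct and follows essentially the same route as the paper: direct verification of well-definedness over the balanced tensor product, $B$-bilinearity, convolution invertibility via the map obtained by replacing $\sigma$ with $\sigma^{-1}$, normalisation, and finally the cocycle identity by expanding both sides and reducing to \eqref{equation. cocycle condition for H}. The only point the paper makes explicit that you leave implicit is the preliminary observation that the two hypotheses together force $B$ to be a genuine left $H$-module algebra (condition (2) of Lemma~\ref{lemma. twisted smash product} collapses to $h\triangleright(k\triangleright b)=(hk)\triangleright b$ once $\sigma$ is central and $H$ cocommutative), which is what makes the smash product $B\# H$ and the iterated-action manipulations in the cocycle step legitimate.
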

\begin{proof}
Since $H$ is cocommutative and the image of $\sigma$ belongs to the centre of $B$, then by Lemma \ref{lemma. twisted smash product},  $B$ is a left $H$-module algebra, which ensures the smash product $B\#H$ is well defined.
Let $X=b\#\one{h}\otimes b'\# S(\two{h})$, $Y=c\#\one{g}\otimes c'\# S(\two{g})$ and $Z=d\#\one{k}\otimes d'\# S(\two{k})$ be arbitrary three elements in $\C(B\#H, H)$.
First we can show this cocycle is well defined over the balanced tensor over $B\otimes B^{op}$:
On the one hand we have
\begin{align*}
    \tilde{\sigma}(X\eta(d\otimes d'), Y)) &=\tilde{\sigma}((b\#\one{h}\otimes b'\# S(\two{h}))\eta(d\otimes d'), c\#\one{g}\otimes c'\# S(\two{g}))\\
     &=b(\one{h}\triangleright d)(\two{h}\triangleright c)\sigma(\three{h}, \one{g})((\four{h}\two{g})\triangleright c')(\five{h}\triangleright(d'b')),
\end{align*}
for all $d, d'\in B$; On the other hand
\begin{align*}
     \tilde{\sigma}(X, \eta(d\otimes d')Y))&=\tilde{\sigma}((b\#\one{h}\otimes b'\# S(\two{h})), \eta(d\otimes d')c\#\one{g}\otimes c'\# S(\two{g}))\\
     &=\tilde{\sigma}((b\#\one{h}\otimes b'\# S(\two{h})), dc\#\one{g}\otimes c'(S(\three{g})\triangleright d')\# S(\two{g}))\\
     &=b(\one{h}\triangleright(dc))\sigma(\two{h}, \one{g})((\three{h}\two{g})\triangleright(c'(S(\three{g})\triangleright d')))(\four{h}\triangleright b')\\
     &=b(\one{h}\triangleright d)(\two{h}\triangleright c)\sigma(\three{h}, \one{g})((\four{h}\two{g})\triangleright c')(\five{h}\triangleright(d'b')).
\end{align*}
The inverse $\tilde{\sigma}^{-1}$ is given by
\begin{align}
     \tilde{\sigma}^{-1}(X, Y):=b(\one{h}\triangleright c)\sigma^{-1}(\two{h}, \one{g})((\three{h}\two{g})\triangleright c')(\four{h}\triangleright b').
\end{align}
We can see that
\begin{align*}
 \tilde{\sigma}^{-1}\star  \tilde{\sigma}(X, Y)&=\tilde{\sigma}^{-1}\star  \tilde{\sigma}(b\#\one{h}\otimes b'\# S(\two{h}), c\#\one{g}\otimes c'\# S(\two{g}))\\
 &=b(\one{h}\triangleright c)\sigma^{-1}(\two{h}, \one{g})\sigma(\three{h}, \two{g})((\four{h}\three{g})\triangleright c')(\five{h}\triangleright b')\\
 &=\epsilon((b\#\one{h}\otimes b'\# S(\two{h}))(c\#\one{g}\otimes c'\# S(\two{g})))\\
 &=\tilde{\epsilon}(X, Y),
\end{align*}
where $\tilde{\epsilon}$ is the unit in the algebra ${}_{B}\Hom_{B}(\cL\otimes_{B\otimes B^{op}}\cL, B)$. Similarly, we can also see $\tilde{\sigma}\star  \tilde{\sigma}^{-1}=\tilde{\epsilon}$.

It is clear that $\tilde{\sigma}$ is left $B$-linear, we can also show that $\tilde{\sigma}$ is also right $B$-linear:
\begin{align*}
    \tilde{\sigma}(t(b'')X, Y)&=\tilde{\sigma}(t(b'')(b\#\one{h}\otimes b'\# S(\two{h})), c\#\one{g}\otimes c'\# S(\two{g}))\\
    &=\tilde{\sigma}(b\#\one{h}\otimes b'(S(\three{h})\triangleright b'')\# S(\two{h}), c\#\one{g}\otimes c'\# S(\two{g}))\\
    &=b(\one{h}\triangleright c)\sigma(\two{h}, \one{g})((\three{h}\two{g})\triangleright c')(\four{h}\triangleright (b'(S(\five{h})\triangleright b'')))\\
    &=b(\one{h}\triangleright c)\sigma(\two{h}, \one{g})((\three{h}\two{g})\triangleright c')(\four{h}\triangleright b')b''\\
    &=\tilde{\sigma}(X, Y)b'',
\end{align*}
for all $b''\in B$. Now let's show the cocycle condition of $\tilde{\sigma}$.

On the one hand we have:
\begin{align*}
    &\tilde{\sigma}(X, s(\tilde{\sigma}(\one{Y}, \one{Z}))\two{Y}\two{Z})\\
    &=\tilde{\sigma}(b\#\one{h}\otimes b'\# S(\two{h}), c(\one{g}\triangleright d)\sigma(\two{g}, \one{k})\#\three{g}\two{k}\otimes d'(S(\four{k})\triangleright c')\#S(\three{k})S(\four{g})))\\
    %&=b\Big(\one{h}\triangleright \big(c(\one{g}\triangleright d)\sigma(\two{g}, \one{k})\big)\Big)\sigma(\two{h}, \one{\three{g}}\one{\two{k}})\big((\three{h}\two{\three{g}}\two{\two{k}})\triangleright (d'(S(\three{k})\triangleright c'))\big)(\four{h}\triangleright b')\\
    &=b\Big(\one{h}\triangleright \big(c(\one{g}\triangleright d)\sigma(\two{g}, \one{k})\big)\Big)\sigma(\two{h}, \three{g}\two{k})\big((\three{h}\four{g}\three{k})\triangleright (d'(S(\four{k})\triangleright c'))\big)(\four{h}\triangleright b').
\end{align*}
On the other hand,
\begin{align*}
    &\tilde{\sigma}(s(\tilde{\sigma}(\one{X}, \one{Y}))\two{X}\two{Y}, Z)\\
    &=\tilde{\sigma}(b(\one{h}\triangleright c)\sigma(\two{h}, \one{g})\#\three{h}\two{g}\otimes c' (S(\four{g})\triangleright b')\# S(\three{g})S(\four{h}), d\#\one{k}\otimes d'\#S(\two{k}))\\
    &=b(\one{h}\triangleright c)\sigma(\two{h}, \one{g})((\three{h}\two{g})\triangleright d)\sigma(\four{h}\three{g}, \one{k})((\five{h}\four{g}\two{k})\triangleright d')\big((\six{h}\five{g})\triangleright(c'(S(\six{g})\triangleright b'))\big).
\end{align*}
Compare the results on both hand sides, it is sufficient to show
\begin{align*}
    \one{h}\triangleright((\one{g}\triangleright d)\sigma(\two{g}, \one{k}))\sigma(\two{h}, \three{g}\two{k})=\sigma(\one{h}, \one{g})((\two{h}\two{g})\triangleright d)\sigma(\three{h}\three{g}, k).
\end{align*}
We can see the left hand side is
\begin{align*}
     &\one{h}\triangleright((\one{g}\triangleright d)\sigma(\two{g}, \one{k}))\sigma(\two{h}, \three{g}\two{k})\\
     &=(\one{h}\triangleright (\one{g}\triangleright d))(\two{h}\triangleright\sigma(\two{g}, \one{k}))\sigma(\three{h}, \three{g}\two{k})\\
     &=(\one{h}\triangleright (\one{g}\triangleright d))\sigma(\two{h}, \two{g})\sigma(\three{h}\three{g}, k)\\
     &=\sigma(\one{h}, \one{g})((\two{h}\two{g})\triangleright d)\sigma(\three{h}\three{g}, k),
\end{align*}
where the second step uses (\ref{equation. cocycle condition for H}), and the last step uses Lemma \ref{lemma. twisted smash product}. Finally, we can show the normalisation condition:
\begin{align*}
    &\tilde{\sigma}(X, 1)=b(\one{h}\triangleright b')=\epsilon(X)\\
    &\tilde{\sigma}(1, X)=b(\one{h}\triangleright b')=\epsilon(X).
\end{align*}

\end{proof}

\begin{lem}\label{lemma. coring map}
For a crossed product $B\#_{\sigma} H$, with $H$ being cocommutative and the image of $\sigma$ belonging to the centre of $B$, then the map $\phi: \C(B\#H, H)\to \C(B\#_{\sigma}H, H)$ is a $B$-coring map, which is given by
\begin{align}\label{equ. isomorphic map between bialgebroid}
   \phi( b\#\one{h}\otimes b'\# S(\two{h})):=b\#_{\sigma}\one{h}\otimes b'\sigma^{-1}(S(\three{h}), \four{h})\#_{\sigma}S(\two{h})
\end{align}
for any $b\#\one{h}\otimes b'\# S(\two{h})\in \C(B\#H, H)$.
\end{lem}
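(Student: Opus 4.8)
The plan is to verify that $\phi$ is a morphism of $B$-corings by checking, in order: (a) that $\phi$ actually takes values in $\C(B\#_{\sigma}H,H)$; (b) that $\phi$ is a $B$-bimodule map for the coring bimodule structures; (c) that $\phi$ intertwines the counits; and (d) that $\phi$ intertwines the coproducts. Throughout I will use the explicit translation map of the cleft extension $B=(B\#_{\sigma}H)^{coH}$, namely $\tau_{\sigma}(h)=\sigma^{-1}(S(\two{h}),\three{h})\#_{\sigma}S(\one{h})\ot_{B}1\#_{\sigma}\four{h}$, read off from \eqref{equ. inverse of canonical map of cleft extension}, together with the dual properties of $\sigma$ collected in Lemma \ref{lemma. twisted smash product} and Proposition \ref{proposition. unital twist}. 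As in the proof of Lemma \ref{lemma. construction of 2 cocycle}, the hypotheses that $H$ is cocommutative and $\Ima\sigma\subseteq Z(B)$ are used, first of all, to guarantee that $B$ is a left $H$-module algebra, so that $B\#H$ and hence $\C(B\#H,H)$ make sense at all.

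The heart of the matter is step (d), and the formula for $\phi$ is tailored so that this works essentially on the nose. On a generator $X=b\#\one{h}\ot b'\#S(\two{h})$ we have the explicit coproduct $\Delta(X)=b\#\one{h}\ot 1\#S(\two{h})\ot_{B}1\#\three{h}\ot b'\#S(\four{h})$, so $(\phi\ot_{B}\phi)(\Delta(X))$ is obtained by applying $\phi$ in each of the two coring-factors; on the other side, $\Delta_{\sigma}(\phi(X))$ is computed by applying the translation map $\tau_{\sigma}$ to the first tensor leg of $\phi(X)$. After relabelling Sweedler legs, both expressions reduce to the same tensor, namely $(b\#_{\sigma}\one{h}\ot\sigma^{-1}(S(\three{h}),\four{h})\#_{\sigma}S(\two{h}))\ot_{B}(1\#_{\sigma}\five{h}\ot b'\sigma^{-1}(S(\seven{h}),\eight{h})\#_{\sigma}S(\six{h}))$, so they agree with no extra input. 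For well-definedness (step (a)) I apply the diagonal coaction to $\phi(X)$; since $S$ reverses comultiplication, the $H$-component that appears has the form $\one{k}S(\two{k})$ for an internal sub-coproduct $k$ of $h$, and as the surviving legs are adjacent they may be regrouped as a genuine $\Delta$, so this collapses through the counit and leaves exactly $\phi(X)\ot 1_{H}$; only the canonical nature of iterated coproducts is needed here.

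For the counit (step (c)) I compute $\epsilon_{\sigma}(\phi(X))$ as the product of its two tensor factors inside $B\#_{\sigma}H$ using \eqref{equation. product of twisted crossed product}; after an adjacent $\Delta$--$S$ pair telescopes, one is left with a factor of shape $(\one{h}\triangleright\sigma^{-1}(S(\four{h}),\five{h}))\,\sigma(\two{h},S(\three{h}))$, which is $\epsilon(h)1$ by part (4) of Proposition \ref{proposition. unital twist}, and the outcome is precisely $\epsilon(X)$. For the bimodule property (step (b)), compatibility with the source map is immediate from the multiplication rules (it amounts to multiplying the first tensor leg on the left), whereas compatibility with the target map is the one place the standing hypotheses enter the coring computation itself: one moves the twisted right action past the central factor $\sigma^{-1}(S(\three{h}),\four{h})$ in the second tensor leg (using $\Ima\sigma\subseteq Z(B)$) and then reorders the resulting chain of Sweedler legs using cocommutativity so that the two sides coincide.

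I expect the only real obstacle to be bookkeeping rather than ideas: every step is a Sweedler-notation manipulation, and the delicate point is to keep track, when expanding coproducts and the translation map $\tau_{\sigma}$, of precisely which legs of $h$ feed the factor $\sigma^{-1}(S(\cdot),\cdot)$ as opposed to the bare $S(\cdot)$-leg. Getting this matching right is exactly what makes the two sides of step (d) close without any appeal to cocommutativity or to cocycle identities, and it is conversely what dictates the single genuine use of cocommutativity, in the target-map half of step (b). Beyond Lemma \ref{lemma. twisted smash product}, Proposition \ref{proposition. unital twist}, and the explicit form of $\tau_{\sigma}$, no new ingredient is required; note that transporting the coring structure along a left-$B$-module and right-$H$-comodule isomorphism $B\#H\cong B\#_{\sigma}H$ does not directly produce this $\phi$, since such an isomorphism need not respect the algebra structures that the counit of $\C$ uses, and the $\sigma^{-1}$-twist in $\phi$ is exactly what corrects for this.
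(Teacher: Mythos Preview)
Your proposal is correct and follows essentially the same route as the paper's own proof: the paper verifies the counit condition via Proposition~\ref{proposition. unital twist}(4), left $B$-linearity trivially, right $B$-linearity using cocommutativity of $H$ together with $\operatorname{im}\sigma\subseteq Z(B)$, and the coproduct condition by plugging in the explicit translation map $\tau_{\sigma}$ --- exactly your steps (c), (b), and (d), with the same tools and the same observation that (d) closes by pure Sweedler bookkeeping without invoking the hypotheses. The only difference is that you explicitly include step (a), verifying that $\phi(X)\in\C(B\#_{\sigma}H,H)$, which the paper leaves implicit; your argument for it (adjacent legs $\two{h}S(\three{h})$ collapsing via the counit) is the right one.
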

\begin{proof}
First we check $\epsilon=\epsilon^{\sigma}\circ\phi$, where $\epsilon^{\sigma}$ is the counit of $\C(B\#_{\sigma}H, H)$.
Let $X=b\#\one{h}\otimes b'\# S(\two{h})\in\C(B\#H, H)$, then
\begin{align*}
    \epsilon^{\sigma}(\phi(X))&=(b\#_{\sigma}\one{h})(b'\sigma^{-1}(S(\three{h}), \four{h})\#_{\sigma}S(\two{h}))\\
    &=b(\one{h}\triangleright b')(\two{h}\triangleright \sigma^{-1}(S(\five{h}), \six{h}))\sigma(\three{h}, S(\four{h}))\#_{\sigma} 1\\
    &=b(h\triangleright b')\#_{\sigma} 1\\
    &=\epsilon(b\#\one{h}\otimes b'\# S(\two{h})),
\end{align*}
where in the third step we use Proposition \ref{proposition. unital twist}. Here we always identify $B$ with its image in $B\#H$ and $B\#_{\sigma}H$ by $b\mapsto b\#1$ and $b\mapsto b\#_{\sigma}1$ respectively. We can see $\phi$ is left $B$-module map. We can also check it is right $B$-linear:
\begin{align*}
    \phi(X\triangleleft b'')=&\phi(b\#\one{h}\otimes b' (S(\three{h})\triangleright b'')\# S(\two{h}))\\
    =&b\#_{\sigma}\one{h}\otimes b' (S(\five{h})\triangleright b'')\sigma^{-1}(S(\three{h}), \four{h})\#_{\sigma} S(\two{h})\\
    =&b\#_{\sigma}\one{h}\otimes b' \sigma^{-1}(S(\four{h}), \five{h})(S(\three{h})\triangleright b'')\#_{\sigma} S(\two{h})\\
    =&\phi(X)\triangleleft b'',
\end{align*}
for all $b''\in B$, where the third step uses the fact that $H$ is cocommutative and the image of $\sigma$ belongs to the centre of $B$. Recall that the translation map of the Hopf Galois extension $B=(B\#_{\sigma}H)^{coH}\subseteq B\#_{\sigma}H$ is given by
\begin{align*}
    \tau(h)=\sigma^{-1}(S(\two{h}), \three{h})\#_{\sigma}S(\one{h})\otimes_{B}1\#_{\sigma}\four{h},
\end{align*}
for all $h\in H$. So we have
\begin{align*}
    &(\phi\otimes_{B}\phi)(\Delta(X))\\
    &=(\phi\otimes_{B}\phi)(b\#\one{h}\otimes 1\# S(\two{h})\otimes_{B} 1\#\three{h}\otimes b'\# S(\four{h}))\\
    &=b\#_{\sigma}\one{h}\otimes \sigma^{-1}(S(\three{h}), \four{h})\#_{\sigma}S(\two{h})\otimes_{B}1\#_{\sigma}\five{h}\otimes b'\sigma^{-1}(S(\seven{h}), \eight{h})\#_{\sigma}S(\six{h})\\
    &=\Delta^{\sigma}(b\#_{\sigma}\one{h}\otimes b'\sigma^{-1}(S(\three{h}), \four{h})\#_{\sigma}S(\two{h}))\\
    &=\Delta^{\sigma}(\phi(X)),
\end{align*}
where $\Delta^{\sigma}$ is the coproduct of $\C(B\#_{\sigma}H, H)$.
\end{proof}

\begin{thm}\label{theorem. 2-cocycle twist}
For a crossed product $B\#_{\sigma} H$, with $H$ being cocommutative and the image of $\sigma$ belonging to the centre of $B$, then there is an invertible normalised 2-cocycle $\tilde{\sigma}$ on $\C(B\#H, H)$, such that
$\phi: \C(B\#H, H)^{\tilde{\sigma}}\to \C(B\#_{\sigma}H, H)$ is an isomorphism of left $B$-bialgebroids, where $\tilde{\sigma}$ is given by (\ref{equ. 2-cocycle on bialgebroid}) and $\phi$ is given by (\ref{equ. isomorphic map between bialgebroid}).
\end{thm}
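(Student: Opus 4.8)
The strategy is to combine the two preceding lemmas. By Lemma~\ref{lemma. construction of 2 cocycle} the map $\tilde{\sigma}$ of \eqref{equ. 2-cocycle on bialgebroid} is an invertible normalised 2-cocycle on $\C(B\#H,H)$, so by Proposition~\ref{proposition. 2 cocycle twist} the $B$-coring $\C(B\#H,H)$ equipped with the twisted product $\cdot_{\tilde\sigma}$ is a left $B$-bialgebroid $\C(B\#H,H)^{\tilde\sigma}$. By Lemma~\ref{lemma. coring map} the map $\phi$ of \eqref{equ. isomorphic map between bialgebroid} is already a $B$-coring morphism $\C(B\#H,H)\to\C(B\#_\sigma H,H)$ (and note the coring structures of $\C(B\#H,H)$ and $\C(B\#H,H)^{\tilde\sigma}$ coincide, since twisting only changes the product); it is also $B$-bilinear as checked there, hence it carries source and target maps to source and target maps. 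Therefore it remains only to show two things: that $\phi$ is an algebra map from $\C(B\#H,H)^{\tilde\sigma}$ (with product $\cdot_{\tilde\sigma}$) to $\C(B\#_\sigma H,H)$ (with product $\bullet_\C$), and that $\phi$ is bijective. Granting these, $\phi$ is by definition a morphism of left $B$-bialgebroids with a two-sided inverse that is automatically a bialgebroid morphism as well, so $\phi$ is an isomorphism.

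First I would verify bijectivity, which is the easy half. One writes down the candidate inverse
\[
\phi^{-1}\big(b\#_\sigma\one{h}\otimes b'\#_\sigma S(\two{h})\big)
= b\#\one{h}\otimes b'\sigma(S(\three{h}),\four{h})\#S(\two{h}),
\]
obtained by replacing $\sigma^{-1}$ with $\sigma$ in \eqref{equ. isomorphic map between bialgebroid}, and checks $\phi\circ\phi^{-1}=\id$ and $\phi^{-1}\circ\phi=\id$ using the convolution identity $\sigma(S(\two{h}),\three{h})\sigma^{-1}(S(\one{h}),\four{h})=\epsilon(h)1$ that follows from cocommutativity of $H$ together with centrality of the image of $\sigma$ (the same move already used in Lemma~\ref{lemma. coring map}); one should also check $\phi^{-1}$ is well defined on $\C(B\#_\sigma H,H)$, i.e.\ that its image lands in $\C(B\#H,H)$, which is again a Sweedler-index bookkeeping exercise using \eqref{ec2}.

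The substantive step is multiplicativity. Take $X=b\#\one{h}\otimes b'\#S(\two{h})$ and $Y=c\#\one{g}\otimes c'\#S(\two{g})$ in $\C(B\#H,H)$. On one side, $\phi(X\cdot_{\tilde\sigma}Y)$ is computed from the twisted product formula in Proposition~\ref{proposition. 2 cocycle twist}, namely $X\cdot_{\tilde\sigma}Y=s(\tilde\sigma(\one{X},\one{Y}))\,t(\tilde\sigma^{-1}(\three{X},\three{Y}))\,\two{X}\two{Y}$, where $\two{X}\two{Y}$ means the product $\bullet_\C$ in $\C(B\#H,H)$ and one then applies $\phi$. On the other side, $\phi(X)\bullet_\C\phi(Y)$ is the product \eqref{pro} in $\C(B\#_\sigma H,H)$, which involves the crossed-product multiplication \eqref{equation. product of twisted crossed product} and hence genuine occurrences of $\sigma$. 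Expanding both sides using the comultiplication $\Delta$ of the gauge coring (which on $\C(B\#H,H)$ inserts $1\#S\otimes_B 1\#(\cdot)$ in the middle $H$-slot) and the explicit forms of $\tilde\sigma$, $\tilde\sigma^{-1}$, one reduces the desired equality to an identity in $B$ among strings in $\triangleright$, $\sigma$ and $\sigma^{-1}$. The expected main obstacle is precisely the bookkeeping here: tracking the many Sweedler legs of $h$ and $g$ (the coproduct of the gauge coring triples the $H$-variable, and the crossed product itself is cubic in the coproduct) and collapsing them correctly. The identity should close using the cocycle condition \eqref{equation. cocycle condition for H}, the twisted-module property (2) of Lemma~\ref{lemma. twisted smash product}, the identities of Proposition~\ref{proposition. unital twist}, and above all cocommutativity of $H$ together with centrality of $\mathrm{im}\,\sigma$, which allow the $\sigma$-factors to be freely commuted past the $\triangleright$-actions and reordered so that the $\sigma^{-1}$ coming from $\phi$ on $X$, the $\sigma^{-1}$ from $\phi$ on $Y$, the $\sigma$-factors produced by $\bullet_\C$ in $B\#_\sigma H$, and the $\tilde\sigma,\tilde\sigma^{-1}$ from the twist all telescope to the single $\sigma^{-1}$ required by $\phi(X\cdot_{\tilde\sigma}Y)$. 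Once this $B$-valued identity is established, multiplicativity follows, and with bijectivity already in hand the theorem is proved.
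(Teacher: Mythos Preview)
Your proposal is correct and follows essentially the same route as the paper: invoke the two preceding lemmas for the coring-map and 2-cocycle properties, then reduce everything to checking that $\phi$ is multiplicative by expanding $\phi(X\cdot_{\tilde\sigma}Y)$ and $\phi(X)\phi(Y)$ and collapsing the resulting $\sigma$-strings via cocommutativity, centrality of $\mathrm{im}\,\sigma$, and Proposition~\ref{proposition. unital twist}. The paper's own proof does not spell out bijectivity of $\phi$ (it simply asserts that only multiplicativity remains), so your explicit inverse is a small but welcome addition rather than a deviation.
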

\begin{proof}
Since $\phi$ is a coring map, so we only need to show $\phi$ is an algebra map. Let $X=b\#\one{h}\otimes b'\# S(\two{h})$, $Y=c\#\one{g}\otimes c'\# S(\two{g})\in \C(B\#H, H)$. On the one hand,
\begin{align*}
    &\phi(X\cdot_{\tilde{\sigma}}Y)\\
    &=\phi\Big(\tilde{\sigma}(b\#\one{h}\otimes 1\# S(\two{h}), c\#\one{g}\otimes 1\# S(\two{g}))\#\three{h}\three{g}\otimes\\ &\big((S(\five{g})S(\five{h}))\triangleright \tilde{\sigma}^{-1}(1\#\six{h}\otimes b'\# S(\seven{h}), 1\#\six{g}\otimes c'\# S(\seven{g}))\big)\#S(\four{g})S(\four{h})\Big)\\
    &=\phi\Big(b(\one{h}\triangleright c)\sigma(\two{h}, \one{g})\#\three{h}\two{g}\otimes\\
    &(S(\four{g})S(\five{h}))\triangleright\big(\sigma^{-1}(\six{h}, \five{g})((\seven{h}\six{g})\triangleright c')(\eight{h}\triangleright b')) \big)\#S(\three{g})S(\four{h})\Big)\\
    &=b(\one{h}\triangleright c)\sigma(\two{h}, \one{g})\#_{\sigma}\three{h}\two{g}\otimes(S(\six{g})S(\seven{h}))\triangleright\big(\sigma^{-1}(\eight{h}, \seven{g})((\nine{h}\eight{g})\triangleright c')(\ten{h}\triangleright b')) \big)\\
    &\sigma^{-1}(S(\four{g})S(\five{h}), \six{h}\five{g}))\#_{\sigma}S(\three{g})S(\four{h})\\
    &=b(\one{h}\triangleright c)\sigma(\two{h}, \one{g})\#_{\sigma}\three{h}\two{g}\otimes\\
    &(S(\six{g})S(\seven{h}))\triangleright\sigma^{-1}(\eight{h}, \seven{g})c'(S(\eight{g})\triangleright b')\sigma^{-1}(S(\four{g})S(\five{h}), \six{h}\five{g}))\#_{\sigma}S(\three{g})S(\four{h})\\
    &=b(\one{h}\triangleright c)\sigma(\two{h}, \one{g})\#_{\sigma}\three{h}\two{g}\otimes\sigma(S(\six{g})S(\seven{h}), \eight{h}\seven{g})\sigma^{-1}(S(\eight{g})S(\nine{h})\ten{h}, \nine{g})\\
    &\sigma^{-1}(S(\ten{g})S(\eleven{h}), \twelve{h})c'(S(\eleven{g})\triangleright b')\sigma^{-1}(S(\four{g})S(\five{h}), \six{h}\five{g}))\#_{\sigma}S(\three{g})S(\four{h})\\
    &=b(\one{h}\triangleright c)\sigma(\two{h}, \one{g})\#_{\sigma}\three{h}\two{g}\otimes\\
    &\sigma^{-1}(S(\four{g}), \five{g})\sigma^{-1}(S(\six{g})S(\five{h}), \six{h})c'(S(\seven{g})\triangleright b')\#_{\sigma}S(\three{g})S(\four{h}),
\end{align*}
where in the 4th, 5th and 6th step we use that $H$ is cocommutative and the image of $\sigma$ belongs to the centre of $B$, and the 5th step also uses Proposition \ref{proposition. unital twist}.
On the other hand,
\begin{align*}
    &\phi(X)\phi(Y)\\
    &=(b\#_{\sigma}\one{h}\otimes b'\sigma^{-1}(S(\three{h}), \four{h})\#_{\sigma} S(\two{h}))(c\#_{\sigma}\one{g}\otimes c'\sigma^{-1}(S(\three{g}), \four{g})\#_{\sigma} S(\two{g}))\\
    &=b(\one{h}\triangleright c)\sigma(\two{h}, \one{g})\#_{\sigma}\three{h}\two{g}\otimes\\
    &c'\sigma^{-1}(S(\six{g}), \seven{g})\big(S(\five{g})\triangleright(b'\sigma^{-1}(S(\six{h}), \seven{h}))\big)\sigma(S(\four{g}), S(\five{h}))\#_{\sigma}S(\three{g})S(\four{h})\\
    &=b(\one{h}\triangleright c)\sigma(\two{h}, \one{g})\#_{\sigma}\three{h}\two{g}\otimes\\
    &c'\sigma^{-1}(S(\five{g}), \six{g})(S(\seven{g})\triangleright b')(S(\eight{g})\triangleright\sigma^{-1}(S(\six{h}), \seven{h}))\sigma(S(\four{g}), S(\five{h}))\#_{\sigma}S(\three{g})S(\four{h})\\
    &=b(\one{h}\triangleright c)\sigma(\two{h}, \one{g})\#_{\sigma}\three{h}\two{g}\otimes c'(S(\five{g})\triangleright b')\sigma^{-1}(S(\six{g}), \seven{g})\sigma(S(\eight{g}), S(\six{h})\seven{h})\\
    &\sigma^{-1}(S(\nine{g})S(\eight{h}), \nine{h})\sigma^{-1}(S(\ten{g}), S(\ten{h}))\sigma(S(\four{g}), S(\five{h}))\#_{\sigma}S(\three{g})S(\four{h})\\
    &=b(\one{h}\triangleright c)\sigma(\two{h}, \one{g})\#_{\sigma}\three{h}\two{g}\otimes\\
    &c'(S(\four{g})\triangleright b')\sigma^{-1}(S(\five{g}), \six{g})\sigma^{-1}(S(\seven{g})S(\five{h}), \six{h})\#_{\sigma}S(\three{g})S(\four{h}),
\end{align*}
where in the 3rd, 4th and 5th steps we use that $H$ is cocommutative and the image of $\sigma$ belongs to the centre of $B$, and the 4th step also uses Proposition \ref{proposition. unital twist}. Since $H$ is cocommutative, we have $\phi(X\cdot_{\tilde{\sigma}}Y)=\phi(X)\phi(Y)$.

\end{proof}

 Similarly, we can also show the following theorem:

\begin{thm}\label{theorem. 2 cocycle twist with trivial action}
For a crossed product $B\#_{\sigma} H$, with the image of $\sigma$ belonging to the centre of $B$, if the action of $H$ on $B$ is trivial (i.e. $h\triangleright b=\epsilon(h)b$, for all $h\in H$ and $b\in B$), then there is an invertible normalised 2-cocycle $\tilde{\sigma}$ on $\C(B\#H, H)$, such that
$\phi: \C(B\#H, H)^{\tilde{\sigma}}\to \C(B\#_{\sigma}H, H)$  is an isomorphism of left $B$-bialgebroids, where $\tilde{\sigma}$ is given by (\ref{equ. 2-cocycle on bialgebroid}) and $\phi$ is given by (\ref{equ. isomorphic map between bialgebroid}).
\end{thm}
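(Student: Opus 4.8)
The plan is to follow the same route as for Theorem~\ref{theorem. 2-cocycle twist}, with the triviality of the action $\triangleright$ playing the role that cocommutativity of $H$ played there. First note that when $h\triangleright b=\epsilon(h)b$ the axioms of Definition~\ref{definition. measure} hold tautologically and $h\triangleright(k\triangleright b)=\epsilon(hk)\,b=(hk)\triangleright b$, so $B$ is trivially a left $H$-module algebra and the ordinary smash product $B\#H\cong B\ot H$ (as an algebra) is well defined. Hence the description of $\C(B\#H,H)$, of its coproduct and counit, and of the translation map $\tau(h)=1\#S(\one{h})\ot_B 1\#\two{h}$ recalled before Lemma~\ref{lemma. construction of 2 cocycle} all remain valid. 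Under the trivial action, moreover, formula~\eqref{equ. 2-cocycle on bialgebroid} collapses: every factor of the form (a word in the Sweedler components of $h$ and $g$)$\triangleright(-)$ is killed by a counit, so for $X=b\#\one{h}\ot b'\#S(\two{h})$ and $Y=c\#\one{g}\ot c'\#S(\two{g})$ one simply gets $\tilde{\sigma}(X,Y)=bc\,\sigma(h,g)\,c'b'$ and $\tilde{\sigma}^{-1}(X,Y)=bc\,\sigma^{-1}(h,g)\,c'b'$ (with the image of $\sigma$ central in $B$), while $\phi(X)=b\#_{\sigma}\one{h}\ot b'\sigma^{-1}(S(\three{h}),\four{h})\#_{\sigma}S(\two{h})$.

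Next I would re-run the proofs of Lemma~\ref{lemma. construction of 2 cocycle} and Lemma~\ref{lemma. coring map} essentially verbatim. In both of these the only step that used cocommutativity is one where a base element acted on by some word in the Sweedler components of $h$ (and $g$) must be commuted past a central value of $\sigma^{\pm 1}$; under the trivial action that word acts by its counit, the factor becomes a plain element of $B$, and the commutation is immediate because $\sigma^{\pm1}$ take values in the centre of $B$. (Concretely, in the right $B$-linearity checks the term $S(\cdots)\triangleright b''$ becomes $b''$; and in the cocycle-condition computation the identity to be verified reduces, once all the $\triangleright$'s have collapsed, to the ordinary cocycle identity~\eqref{equation. cocycle condition for H} for $\sigma$.) This yields that $\tilde{\sigma}$ given by~\eqref{equ. 2-cocycle on bialgebroid} is an invertible normalised $2$-cocycle on $\C(B\#H,H)$ and that $\phi$ given by~\eqref{equ. isomorphic map between bialgebroid} is a morphism of $B$-corings. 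As in Theorem~\ref{theorem. 2-cocycle twist}, $\phi$ is moreover bijective: it turns the cleaving map $1\#h$ of $B\#H$ into the cleaving map $1\#_{\sigma}h$ of $B\#_{\sigma}H$, hence acts leg-wise by multiplication by convolution-invertible $\sigma^{\pm1}$-valued factors, and its inverse is obtained by interchanging the roles of the two cleavings.

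It then remains only to verify that $\phi\colon \C(B\#H,H)^{\tilde{\sigma}}\to\C(B\#_{\sigma}H,H)$ is an algebra map, i.e.\ $\phi(X\cdot_{\tilde{\sigma}}Y)=\phi(X)\phi(Y)$ for $X,Y$ as above. I would compute both sides exactly as in the proof of Theorem~\ref{theorem. 2-cocycle twist}; the computation is the same but shorter, since every occurrence of a word in the Sweedler components of $h$ and $g$ acting via $\triangleright$ on $b',c'$ or on a value of $\sigma^{\pm1}$ now disappears against a counit. Both $\phi(X\cdot_{\tilde{\sigma}}Y)$ and $\phi(X)\phi(Y)$ thereby reduce to one and the same expression, of the shape $bc\,\sigma(\one{h},\one{g})\#_{\sigma}\two{h}\two{g}\ot c'b'\,\bigl(\text{central product of values of }\sigma\text{ and }\sigma^{-1}\bigr)\#_{\sigma}S(\three{g})S(\four{h})$, the two central $\sigma$-products being identified by Proposition~\ref{proposition. unital twist} precisely as there. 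The single place where cocommutativity entered the argument of Theorem~\ref{theorem. 2-cocycle twist} --- rearranging Sweedler legs inside a $\triangleright$-term so that the two sides match --- is now vacuous. Hence $\phi$ is an isomorphism of left $B$-bialgebroids.

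The main obstacle is, exactly as in Theorem~\ref{theorem. 2-cocycle twist}, the bookkeeping of the many Sweedler indices in the algebra-map computation; conceptually nothing is new, as the triviality of $\triangleright$ both supplies the $H$-module-algebra structure needed to form $B\#H$ and collapses all the terms that previously forced the use of cocommutativity.
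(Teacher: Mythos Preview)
Your proposal is correct and follows essentially the same route as the paper: re-run Lemmas~\ref{lemma. construction of 2 cocycle} and~\ref{lemma. coring map} and the algebra-map computation of Theorem~\ref{theorem. 2-cocycle twist}, using triviality of $\triangleright$ (plus centrality of $\sigma$) in place of cocommutativity. One small caveat: your phrase ``precisely as there'' understates the work left once the $\triangleright$'s collapse. After the collapse, the two second-leg $\sigma$-products are not literally identical; matching them requires the identity
\[
\sigma^{-1}\bigl(S(\one{g})S(\one{h}),\,\two{h}\two{g}\bigr)
=\sigma^{-1}\bigl(S(\two{g}),\three{g}\bigr)\,\sigma^{-1}\bigl(S(\two{h}),\,\three{h}\four{g}\bigr)\,\sigma\bigl(S(\one{g}),S(\one{h})\bigr),
\]
which is itself a consequence of Proposition~\ref{proposition. unital twist} (under trivial action, part~(3) reads $\epsilon(h)\sigma^{-1}(k,m)=\sigma(\one{h},\one{k}\one{m})\sigma^{-1}(\two{h}\two{k},\two{m})\sigma^{-1}(\three{h},\three{k})$). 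The paper spells out this intermediate identity explicitly, whereas you absorb it into ``identified by Proposition~\ref{proposition. unital twist}''; the argument is the same, but you should be aware that this is the residual nontrivial step, not the vacuous cocommutativity rearrangement you refer to at the end.
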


\begin{proof}
If $H$ measures $B$ with trivial action, and $B\#_{\sigma}H$ is an unital associative algebra with the image of $\sigma$ belongs to the centre of $B$, then we can also get Lemma \ref{lemma. construction of 2 cocycle}, Lemma \ref{lemma. coring map} and Theorem \ref{theorem. 2-cocycle twist} without asking $H$ being cocommutative. Indeed, in this case $B\#H$ is equal to $B\otimes H$ with factorwise multiplication, so by Lemma \ref{lemma. construction of 2 cocycle}
\begin{align}\label{equ. 2-cocycle on bialgebroid 1}
    \tilde{\sigma}(b\#\one{h}\otimes b'\# S(\two{h}), c\#\one{g}\otimes c'\# S(\two{g})):=bc\sigma(h, g)c'b',
\end{align}
for all $b\#\one{h}\otimes b'\# S(\two{h})$, $c\#\one{g}\otimes c'\# S(\two{g})\in \C(B\# H, H)$. In this case, Lemma \ref{lemma. coring map} can be also shown, since even without the cocommutativity of $H$ we can show that $\phi$ is right $B$-linear. Moreover, in the proof of Theorem \ref{theorem. 2-cocycle twist} we can see on the one hand
\begin{align*}
    &\phi(X\cdot_{\tilde{\sigma}}Y)\\
    &=b(\one{h}\triangleright c)\sigma(\two{h}, \one{g})\#_{\sigma}\three{h}\two{g}\otimes\\
    &(S(\six{g})S(\seven{h}))\triangleright\big(\sigma^{-1}(\eight{h}, \seven{g})((\nine{h}\eight{g})\triangleright c')(\ten{h}\triangleright b')) \big)\sigma^{-1}(S(\four{g})S(\five{h}), \six{h}\five{g}))\#_{\sigma}S(\three{g})S(\four{h})\\
    &=bc\sigma(\one{h}, \one{g})\#_{\sigma}\two{h}\two{g}\otimes\\
    &\sigma^{-1}(\seven{h}, \eight{g})c'b'\sigma^{-1}(S(\five{g}), \six{g})\sigma^{-1}(S(\five{h}), \six{h}\seven{g})\sigma(S(\four{g}), S(\four{h}))\#_{\sigma}S(\three{g})S(\three{h})\\
    &=bc\sigma(\one{h}, \one{g})\#_{\sigma}\two{h}\two{g}\otimes\\
    &c'b'\sigma^{-1}(S(\five{g}), \six{g})\sigma^{-1}(S(\five{h}), \six{h})\sigma(S(\four{g}), S(\four{h}))\#_{\sigma}S(\three{g})S(\three{h}),
\end{align*}
where in the 2nd and 3rd steps we use the fact that action is trivial, and the image of $\sigma$ belongs to the centre of $B$, and the 2nd step also uses
\begin{align*}
    \sigma^{-1}(S(\one{g})S(\one{h}), \two{h}\two{g})=\sigma^{-1}(S(\two{g}), \three{g})\sigma^{-1}(S(\two{h}), \three{h}\four{g})\sigma(S(\one{g}), S(\one{h})),
\end{align*}
which can be derived from Proposition \ref{proposition. unital twist}, the 3rd step also uses Proposition \ref{proposition. unital twist}. On the other hand,
\begin{align*}
    &\phi(X)\phi(Y)\\
    &=b(\one{h}\triangleright c)\sigma(\two{h}, \one{g})\#_{\sigma}\three{h}\two{g}\otimes\\
    &c'\sigma^{-1}(S(\six{g}), \seven{g})\big(S(\five{g})\triangleright(b'\sigma^{-1}(S(\six{h}), \seven{h}))\big)\sigma(S(\four{g}), S(\five{h}))\#_{\sigma}S(\three{g})S(\four{h})\\
    &=bc\sigma(\one{h}, \one{g})\#_{\sigma}\two{h}\two{g}\otimes\\
    &c'b'\sigma^{-1}(S(\five{g}), \six{g})\sigma^{-1}(S(\five{h}), \six{h})\sigma(S(\four{g}), S(\four{h}))\#_{\sigma}S(\three{g})S(\three{h}),
\end{align*}
where in the last step we use the fact that image of $\sigma$ belongs to the centre of $B$ and the action is trivial.
\end{proof}

Recall that a \textit{Galois object} of a Hopf algebra $H$ is a comodule algebra $A$, such that the canonical Galois map is bijective, and $A^{coH}=\mathbb{C}$. As a result of Theorem \ref{theorem. cleft extension} and Theorem \ref{theorem. 2 cocycle twist with trivial action} we have:

\begin{cor}\label{corollary. Schauenburg}
Let $A$ be a cleft Galois object of $H$, then $\C(A, H)$ is isomorphic to $H^{\gamma}$ as Hopf algebra, where $\gamma: H\otimes H\to \mathbb{C}$ is an invertible normalised 2-cocycle on $H$.
\end{cor}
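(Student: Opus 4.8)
The plan is to specialise Theorem~\ref{theorem. 2 cocycle twist with trivial action} to the case in which the base algebra is $\mathbb{C}$ and then to recognise $\C(\mathbb{C}\#H,H)$ as the Hopf algebra $H$ itself. Since $A$ is a Galois object, $B:=A^{coH}=\mathbb{C}$, so by Theorem~\ref{theorem. cleft extension} we may assume $A=\mathbb{C}\#_{\sigma}H$ for some unital cocycle $\sigma\colon H\otimes H\to\mathbb{C}$. The measuring action of $H$ on $\mathbb{C}$ is then forced to be trivial, and the image of $\sigma$ trivially lies in $\mathbb{C}=Z(\mathbb{C})$, so the hypotheses of Theorem~\ref{theorem. 2 cocycle twist with trivial action} hold with no cocommutativity assumption. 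Combining that theorem with Theorem~\ref{theorem. cleft extension} and the functoriality of $\C(-,H)$ gives isomorphisms of left $\mathbb{C}$-bialgebroids
\[
\C(A,H)\;\simeq\;\C(\mathbb{C}\#_{\sigma}H,H)\;\simeq\;\C(\mathbb{C}\#H,H)^{\tilde\sigma},
\]
with $\tilde\sigma$ the $2$-cocycle of Lemma~\ref{lemma. construction of 2 cocycle}; since a left $\mathbb{C}$-bialgebroid is just a bialgebra, these are isomorphisms of bialgebras.

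Next I would identify $\C(\mathbb{C}\#H,H)$ with $H$ as a bialgebra. Identifying $\mathbb{C}\#H$ with $H$ (usual product), this amounts to the canonical Galois object $\mathbb{C}=H^{coH}\subseteq H$, whose translation map is $\tau(h)=S(\one h)\otimes_{\mathbb{C}}\two h$, so that $\C(\mathbb{C}\#H,H)=\{\one h\otimes S(\two h)\mid h\in H\}$. One checks directly that
\[
\phi_{0}\colon H\longrightarrow \C(\mathbb{C}\#H,H),\qquad h\longmapsto \one h\otimes S(\two h),
\]
is an isomorphism of bialgebras: it is bijective with inverse the restriction of $\id_{H}\otimes\epsilon$; it is an algebra map, since with $(a\otimes\tilde a)\bullet_{\C}(a'\otimes\tilde a')=aa'\otimes\tilde a'\tilde a$ one has $(\one h\otimes S(\two h))\bullet_{\C}(\one g\otimes S(\two g))=\one h\one g\otimes S(\two g)S(\two h)=\one h\one g\otimes S(\two h\two g)=\one{(hg)}\otimes S(\two{(hg)})$ because $S$ is anti-multiplicative; and it intertwines the coproducts and counits, the coproduct of $\C(\mathbb{C}\#H,H)$ giving $\Delta(\phi_{0}(h))=\one h\otimes S(\two h)\otimes_{\mathbb{C}}\three h\otimes S(\four h)=(\phi_{0}\otimes_{\mathbb{C}}\phi_{0})(\Delta_{H}(h))$ and $\epsilon(\phi_{0}(h))=\one h S(\two h)=\epsilon(h)$. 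This is the familiar fact that the Ehresmann--Schauenburg bialgebroid of the trivial Galois object is $H$ itself.

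Transporting $\tilde\sigma$ along $\phi_{0}$ and using the explicit trivial-action formula \eqref{equ. 2-cocycle on bialgebroid 1}, one gets $\tilde\sigma(\one h\otimes S(\two h),\,\one g\otimes S(\two g))=\sigma(h,g)$, so the $2$-cocycle induced on the bialgebra $H$ is $\gamma:=\sigma$. For $B=\mathbb{C}$ and trivial action the source and target maps are trivial, and the three conditions defining an invertible normalised $2$-cocycle on $\C(\mathbb{C}\#H,H)$ become, via $\phi_{0}$, exactly those for a Hopf $2$-cocycle on $H$: normalisation and convolution invertibility are immediate, while the bialgebroid cocycle identity reduces to \eqref{equation. cocycle condition for H} with trivial action, namely $\sigma(\one k,\one m)\,\sigma(h,\two k\two m)=\sigma(\one h,\one k)\,\sigma(\two h\two k,m)$. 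Hence $\C(A,H)\simeq H^{\gamma}$ as bialgebras; as $H^{\gamma}$ is a Hopf algebra (Proposition~\ref{proposition. 2 cocycle twist} provides the bialgebra $H^{\gamma}$, and a $2$-cocycle twist of a Hopf algebra is again a Hopf algebra), $\C(A,H)$ is a Hopf algebra and the isomorphism is automatically one of Hopf algebras, the antipode being the unique convolution inverse of the identity.

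I expect the main obstacle to be the bookkeeping of the last two steps: verifying that $\phi_{0}$ is a bialgebra isomorphism, and tracking how the bialgebroid $2$-cocycle $\tilde\sigma$ of Lemma~\ref{lemma. construction of 2 cocycle} corresponds to the ordinary Hopf $2$-cocycle $\sigma$ under it. Once these are in place the statement follows formally from Theorems~\ref{theorem. cleft extension} and \ref{theorem. 2 cocycle twist with trivial action}, together with the standard fact that $2$-cocycle twists of Hopf algebras are Hopf algebras.
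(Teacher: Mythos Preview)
Your proof is correct and follows the same route as the paper: reduce to $B=\mathbb{C}$ via Theorem~\ref{theorem. cleft extension}, apply Theorem~\ref{theorem. 2 cocycle twist with trivial action}, and identify $\C(\mathbb{C}\#H,H)$ with $H$ through $h\mapsto\one h\otimes S(\two h)$. The only differences are cosmetic: the paper cites \cite{HL20} for the bialgebra isomorphism $H\simeq\C(H,H)$ and \cite{schau} for the Hopf algebra structure on $\C(A,H)$, whereas you verify the former directly and obtain the latter from the standard fact that a $2$-cocycle twist of a Hopf algebra is again a Hopf algebra; you also make explicit that $\gamma=\sigma$, which the paper leaves implicit.
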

\begin{proof}
By Theorem \ref{theorem. cleft extension}, we know $A\simeq \mathbb{C}\#_{\sigma}H$, then by Theorem \ref{theorem. 2 cocycle twist with trivial action} we know $\C(\mathbb{C}\#_{\sigma}H, H)\simeq \C(\mathbb{C}\#H, H)^{\tilde{\sigma}}$. Since in \cite{HL20} we know $H\simeq \C(H, H)\simeq\C(\mathbb{C}\#H, H)$, where the isomorphic map $f: H\simeq \C(H, H)$
is given by
$f: h\mapsto \one{h}\otimes S(\two{h})$, and its inverse is given by $f^{-1}: g\otimes h\mapsto g\epsilon(h)$. Then there is a 2-cocycle $\gamma$ on $H$ such that $\C(A, H)\simeq H^{\gamma}$.
As we know in \cite{schau} that $\C(A, H)$ is a Hopf algebra, then we get the corollary.
\end{proof}

Corollary \ref{corollary. Schauenburg} is also shown in \cite{schau}, here we can view Theorem \ref{theorem. 2 cocycle twist with trivial action} as a generalisation of it.

Let $\gamma: H\otimes H\to \mathbb{C}$ be an invertible normalised 2-cocycle on a Hopf algebra $H$, and $B=A^{coH}\subseteq A$ be a cleft Hopf Galois extension. It is shown in \cite{ppca} that we can define a $H^{\gamma}$-comodule algebra $A_{\gamma}$ on the same underlying $H$-comodule $A$ (i.e. $\delta^{A}=:\delta^{A_{\gamma}}: A_{\gamma}\to A_{\gamma}\otimes H^{\gamma}$) with a new product given by
\begin{align}
    a\cdot_{\gamma}a':=\zero{a}\zero{a'}\gamma^{-1}(\one{a}, \one{a'}),
\end{align}
for all $a, a'\in A$. Moreover, $B=A_{\gamma}^{coH^{\gamma}}\subseteq A_{\gamma}$ is a Hopf Galois extension, with the translation map given by
\begin{align}\label{equ. deformed translation map}
   \tau_{\gamma}(h):=\tuno{\three{h}}\otimes_{B}\tdue{\three{h}}\gamma(\one{h}, S(\two{h})),
\end{align}
for all $h\in H$. Indeed, since the canonocal map $\chi_{\gamma}$ is a left $A_{\gamma}$-module map, it is sufficient to show
\begin{align*}
    (\chi_{\gamma}\circ \tau_{\gamma})(h)&=\zero{\tuno{\three{h}}}\zero{\tdue{\three{h}}}\otimes \gamma^{-1}(\one{\tuno{\three{h}}}, \one{\one{\tdue{\three{h}}}})\two{\one{\tdue{\three{h}}}}\gamma(\one{h}, S(\two{h}))\\
    &=\tuno{\four{h}}\zero{\tdue{\four{h}}}\otimes \gamma^{-1}(S(\three{h}), \one{\one{\tdue{\four{h}}}})\two{\one{\tdue{\four{h}}}}\gamma(\one{h}, S(\two{h}))\\
    &=1\otimes \gamma^{-1}(S(\three{h}), \four{h})\five{h}\gamma(\one{h}, S(\two{h}))\\
    &=1\otimes h,
\end{align*}
for all $h\in H$, where the 2nd step uses (\ref{equ. translation map 2}), the 3rd step uses (\ref{equ. translation map 3}), and the last step uses the fact that $\gamma$ is a 2-cocycle on the Hopf algebra $H$. We can also see
\begin{align*}
    (\chi_{\gamma}^{-1}\circ \chi_{\gamma})(a'\otimes_{B} a)&=a'\cdot_{\gamma} \zero{a}\cdot_{\gamma}\tuno{\three{a}}\otimes_{B}\tdue{\three{a}}\gamma(\one{a}, S(\two{a}))\\
    &=a'\cdot_{\gamma}(\zero{a}\zero{\tuno{\four{a}}})\otimes_{B}\tdue{\four{a}}\gamma^{-1}(\one{a}, \one{\tuno{\four{a}}})\gamma(\two{a}, S(\three{a}))\\
    &=a'\cdot_{\gamma}(\zero{a}\tuno{\five{a}})\otimes_{B}\tdue{\five{a}}\gamma^{-1}(\one{a}, S(\four{a}))\gamma(\two{a}, S(\three{a}))\\
    &=a'\cdot_{\gamma}(\zero{a}\tuno{\one{a}})\otimes_{B}\tdue{\one{a}}\\
    &=a'\otimes_{B}a,
\end{align*}
for all $a, a'\in A_{\gamma}$, where the 3rd step use (\ref{equ. translation map 2}),  and the last step uses (\ref{equ. translation map 4}). Therefore, $B=A_{\gamma}^{coH^{\gamma}}\subseteq A_{\gamma}$ is a Hopf Galois extension.

\begin{lem}\label{lemma. twist cleft Galois object}
Let $B=A^{coH}\subseteq A$ be a cleft Hopf Galois extension, then $B=A_{\gamma}^{coH^{\gamma}}\subseteq A_{\gamma}$ is also a cleft Hopf Galois extension.
\end{lem}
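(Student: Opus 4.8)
The plan is to produce an explicit convolution-invertible $H^{\gamma}$-comodule map $\gamma_{A}\colon H^{\gamma}\to A_{\gamma}$ from the cleaving map $\gamma\colon H\to A$ of the original cleft extension (here I would rename the cleaving map to avoid the clash with the $2$-cocycle $\gamma$ on $H$; call it $j\colon H\to A$). Since $A_{\gamma}$ and $H^{\gamma}$ have the same underlying comodule structure as $A$ and $H$ respectively, the map $j$ is still a right $H^{\gamma}$-comodule map $j\colon H^{\gamma}\to A_{\gamma}$ at the level of comodules; the only thing to check is that it admits a convolution inverse \emph{with respect to the deformed product} $\cdot_{\gamma}$ on $A_{\gamma}$ and the deformed coproduct on $H^{\gamma}$. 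So the first step is to recall that $\Delta^{\gamma}(h)=\one{h}\otimes\two{h}$ (the coproduct of $H^{\gamma}$ coincides with that of $H$ as a map; only the algebra structure changes) and to write down the convolution product on $\Hom(H^{\gamma},A_{\gamma})$, namely $(f*_{\gamma}g)(h)=f(\one{h})\cdot_{\gamma}g(\two{h})=\zero{f(\one{h})}\,\zero{g(\two{h})}\,\gamma^{-1}(\one{f(\one{h})},\one{g(\two{h})})$.

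Next I would guess the candidate inverse. Writing $j^{-1}$ for the convolution inverse of $j$ in the \emph{undeformed} setting (so $j(\one{h})j^{-1}(\two{h})=\epsilon(h)1_A=j^{-1}(\one{h})j(\two{h})$), the natural ansatz, in analogy with the deformation formula for the translation map in \eqref{equ. deformed translation map} and with the general cocycle-twist philosophy of \cite{ppca}, is
\begin{align*}
  \tilde{j}(h):=j^{-1}(\two{h})\,\gamma(\one{h},S(\three{h})),\qquad\text{or possibly}\qquad \tilde{j}(h):=\gamma^{-1}(\one{h},S(\two{h}))\,j^{-1}(\three{h}),
\end{align*}
and I would verify $j*_{\gamma}\tilde{j}=\epsilon=\tilde{j}*_{\gamma}j$ by expanding both sides, using that $j$ and $j^{-1}$ are $H$-comodule maps to pull all coactions onto the $H$-legs, then collapsing the resulting $\gamma,\gamma^{-1}$ strings via the $2$-cocycle identity for $\gamma$ on $H$ and its counit normalisation. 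One also has to confirm $\tilde{j}$ is genuinely the two-sided convolution inverse and not merely a one-sided one; in a Hopf-algebra (rather than merely coalgebra) setting a one-sided convolution inverse of a comodule map is automatically two-sided, which disposes of this point. Finally I would note that the coinvariants are unchanged, $A_{\gamma}^{coH^{\gamma}}=A^{coH}=B$, since the coaction is literally the same map, so the Hopf--Galois property was already established in the paragraph preceding the lemma and only cleftness remains — which is exactly what the construction of $\tilde{j}$ provides.

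The main obstacle I anticipate is bookkeeping rather than conceptual: getting the Sweedler indices and the placement of $\gamma$ versus $\gamma^{-1}$ (and of $S$ applied to the correct leg) exactly right so that the $2$-cocycle relation for $\gamma$ on $H$ — in the form $\gamma(\one{h},\one{k})\gamma(\two{h}\two{k},m)=\gamma(\one{k},\one{m})\gamma(\one{h},\two{k}\two{m})$ together with $\gamma(h,1)=\gamma(1,h)=\epsilon(h)1$ — telescopes the middle terms to a single $\epsilon$. A secondary subtlety is that the deformed product $\cdot_{\gamma}$ mixes the $A$-multiplication with $\gamma^{-1}$ evaluated on coactions, so when I write $j(\one{h})\cdot_{\gamma}\tilde{j}(\two{h})$ I must carefully use $\delta^{A}\circ j=(j\otimes\id)\circ\Delta$ and the analogous identity for $j^{-1}$ before I am allowed to feed anything into $\gamma^{-1}$; it is easy to lose a Sweedler component here. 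Once the correct ansatz is pinned down these are all routine, so I would present the candidate $\tilde{j}$, verify one of the two convolution identities in detail, and indicate that the other is symmetric.
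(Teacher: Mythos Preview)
Your approach is correct in outline and would go through once the bookkeeping is sorted: since the coaction is unchanged, the original cleaving map $j$ remains an $H^{\gamma}$-comodule map into $A_{\gamma}$, and a $\gamma$-corrected version of $j^{-1}$ can indeed be shown to be its convolution inverse for the deformed product $\cdot_{\gamma}$ using the $2$-cocycle identity for $\gamma$.

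The paper, however, takes a shorter and computation-free route by invoking the \emph{normal basis} characterisation of cleftness (Theorem~\ref{theorem. cleft extension}) rather than the cleaving-map definition. From cleftness of $A$ one has a left $B$-linear, right $H$-colinear isomorphism $F\colon A\to B\otimes H$. The key observation is that the twist by $\gamma$ changes only the algebra structures, not the coactions or the left $B$-module structures (for $b\in B$ one has $b\cdot_{\gamma}a=ba$ since $\gamma^{-1}(1_{H},-)=\epsilon$). Hence the \emph{same} map $F$, viewed as $F_{\gamma}\colon A_{\gamma}\to B\otimes H^{\gamma}$, is still a left $B$-linear, right $H^{\gamma}$-colinear isomorphism, and the normal basis property for $A_{\gamma}$ is immediate. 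Your approach buys an explicit formula for the deformed cleaving inverse, which could be useful elsewhere, but at the cost of a cocycle computation; the paper's approach sidesteps all Sweedler manipulation by passing to the equivalent characterisation that is insensitive to the deformation.
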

\begin{proof}
Since $B=A^{coH}\subseteq A$ is a cleft Hopf Galois extension, from Theorem \ref{theorem. cleft extension} there is an isomorphic map $F: A\to B\otimes H$ between left $B$-modules and right $H$-comodules.
Define $F_{\gamma}=F$ on the underlying vector space of $A$. Then we can see $F_{\gamma}$ is left $B$-linear and right $H$-colinear:
\begin{align*}
    F_{\gamma}(b\cdot_{\gamma} a)=F(ba)=bF(a)=b\cdot_{\gamma}F_{\gamma}(a),
\end{align*}
for all $b\in B$ and $a\in A$. Thus $F_{\gamma}$ is left $B$ linear. We can also see
\begin{align*}
    \zero{F_{\gamma}(a)}\otimes \one{F_{\gamma}(a)}=&  \zero{F(a)}\otimes\one{F(a)}\\
    =&F(\zero{a})\otimes\one{a}=F_{\gamma}(\zero{a})\otimes\one{a},
\end{align*}
thus $F_{\gamma}$ is a $H$ comodule map. Therefore, $B=A_{\gamma}^{coH^{\gamma}}\subseteq A_{\gamma}$ is a cleft Hopf Galois extension.
\end{proof}

\begin{cor}
Let $A$ be a cleft Galois object of $H$,  and $\gamma: H\otimes H\to \mathbb{C}$ be an invertible normalised 2-cocycle on $H$, then $\C(A_{\gamma}, H^{\gamma})$ is isomorphic to of $\C(A, H)^{\omega}$, where $\omega$ is an invertible normalised 2-cocycle on $\C(A, H)$.
\end{cor}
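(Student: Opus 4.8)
The plan is to deduce the statement from Corollary~\ref{corollary. Schauenburg}, applied to both Hopf algebras $H$ and $H^\gamma$, together with the standard fact that $2$-cocycle twists of a Hopf algebra (viewed as a bialgebroid over $\mathbb{C}$) compose and are invertible.

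First I would check that $A_\gamma$ is itself a cleft Galois object, now of $H^\gamma$. The coaction $\delta^{A_\gamma}$ equals $\delta^A$ as a linear map and the twist $H^\gamma$ has the same unit $1_H$, so the coinvariant subalgebra is unchanged, $A_\gamma^{\,coH^\gamma}=A^{coH}=\mathbb{C}$; that $\mathbb{C}=A_\gamma^{\,coH^\gamma}\subseteq A_\gamma$ is moreover cleft is exactly Lemma~\ref{lemma. twist cleft Galois object}. Corollary~\ref{corollary. Schauenburg} applied to the Hopf algebra $H^\gamma$ and its cleft Galois object $A_\gamma$ then produces an invertible normalised $2$-cocycle $\gamma_2$ on $H^\gamma$ with $\C(A_\gamma,H^\gamma)\cong (H^\gamma)^{\gamma_2}$ as Hopf algebras, while the same corollary applied to $H$ and $A$ produces an invertible normalised $2$-cocycle $\gamma_1$ on $H$ with $\C(A,H)\cong H^{\gamma_1}$.

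Next I would use the composition and inversion of $2$-cocycle twists. Concretely: if $\alpha$ is an invertible normalised $2$-cocycle on a Hopf algebra $K$ and $\beta$ is an invertible normalised $2$-cocycle on $K^\alpha$, then the convolution product $\beta\star\alpha$ (the convolution uses the common coproduct of $K$ and $K^\alpha$) is an invertible normalised $2$-cocycle on $K$ with convolution inverse $\alpha^{-1}\star\beta^{-1}$, and $(K^\alpha)^\beta=K^{\beta\star\alpha}$; moreover $\alpha^{-1}$ is an invertible normalised $2$-cocycle on $K^\alpha$ and $(K^\alpha)^{\alpha^{-1}}=K$. All of this is a direct Sweedler-notation computation from the twisted-product formula of Proposition~\ref{proposition. 2 cocycle twist}, which over $\mathbb{C}$ reads $x\cdot_{\tilde\sigma}y=\tilde\sigma(\one{x},\one{y})\,\tilde\sigma^{-1}(\three{x},\three{y})\,\two{x}\two{y}$: one substitutes the $K^\alpha$-product for $\two{x}\two{y}$ in the $\beta$-twisted product, re-indexes the coproducts, and regroups the scalar factors so that $\alpha\star\alpha^{-1}$ collapses to $\epsilon\otimes\epsilon$; the cocycle and normalisation identities for $\beta\star\alpha$ follow by the same bookkeeping, and $K^{\epsilon\otimes\epsilon}=K$.

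Finally I would combine the three isomorphisms. Since $H^\gamma$ is the $\gamma$-twist of $H$, the composition rule gives $(H^\gamma)^{\gamma_2}=H^{\gamma_2\star\gamma}$, hence $\C(A_\gamma,H^\gamma)\cong H^{\gamma_2\star\gamma}$. Put $\omega:=(\gamma_2\star\gamma)\star\gamma_1^{-1}$: applying the composition rule with $K=H^{\gamma_1}$, $\alpha=\gamma_1^{-1}$ (an invertible normalised $2$-cocycle on $H^{\gamma_1}$ with $(H^{\gamma_1})^{\gamma_1^{-1}}=H$) and $\beta=\gamma_2\star\gamma$ shows that $\omega$ is an invertible normalised $2$-cocycle on $H^{\gamma_1}$ and $(H^{\gamma_1})^{\omega}=H^{\gamma_2\star\gamma}$. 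Transporting $\omega$ along the isomorphism $\C(A,H)\cong H^{\gamma_1}$ of Corollary~\ref{corollary. Schauenburg} gives an invertible normalised $2$-cocycle on $\C(A,H)$, again written $\omega$, for which
$$\C(A_\gamma,H^\gamma)\ \cong\ H^{\gamma_2\star\gamma}\ =\ (H^{\gamma_1})^{\omega}\ \cong\ \C(A,H)^{\omega},$$
as desired. I expect the verification of the composition/inversion rule for $2$-cocycle twists — the Sweedler bookkeeping showing that the convolution products of cocycles living over the three Hopf-algebra structures $H$, $H^\gamma$ and $H^{\gamma_1}$ are again invertible normalised $2$-cocycles — to be the only non-formal step; everything else is an assembly of results already established.
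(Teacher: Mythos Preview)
Your proposal is correct and follows essentially the same route as the paper: both apply Lemma~\ref{lemma. twist cleft Galois object} to see that $A_\gamma$ is a cleft Galois object of $H^\gamma$, invoke Corollary~\ref{corollary. Schauenburg} once for $A$ and once for $A_\gamma$, and then compose the resulting $2$-cocycle twists to produce $\omega$. Your discussion of the composition/inversion rule for successive twists is in fact more careful than the paper's, which simply writes $\omega=\sigma^{-1}\star\gamma\star\rho$ (in your notation $\gamma_1^{-1}\star\gamma\star\gamma_2$) without spelling out the bookkeeping.
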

\begin{proof}
Since $A$  is a cleft Galois object, from the Lemma \ref{lemma. twist cleft Galois object} we know $A_{\gamma}$ is a cleft Galois object of $H^{\gamma}$.

%from \cite{mont} we know that $A\simeq {}_{\sigma}H$ as right comodule algebra for an invertible unital 2-cocycle twist, where ${}_{\sigma}H$ is equal to $H$ as regular  right $H$ comodule, with product given by
%\begin{align*}
   % h\cdot^{\sigma} h':=\sigma(\one{h}, \one{h'})\two{h}\two{h'},
%\end{align*}
%for any $h, h'\in H$.

%So we can see ${}_{\sigma}(H_{\gamma})=({}_{\sigma}H)_{\gamma}$,
Therefore, by Corollary \ref{corollary. Schauenburg} there is an invertible normalised 2-cocycle $\rho$ on $H$, such that
\begin{align*}
    \C(A_{\gamma}, H^{\gamma})\simeq(H^{\gamma})^{\rho},
\end{align*}
Since $\C(A, H)\simeq H^{\sigma}$, we have
\begin{align}
    \C(A_{\gamma}, H^{\gamma})\simeq\C(A, H)^{\sigma^{-1}\star \gamma\star \rho},
\end{align}
and $\omega=\sigma^{-1}\star \gamma\star \rho$.
\end{proof}

\vspace{.5cm}
	
\noindent
{\bf Acknowledgment:}

I would like to thank Prof. Dr. Giovanni Landi and Prof. Dr. Shahn Majid for many useful discussion. I am glad to thank Dr. Song Cheng for the proofreading.

\end{document}